\numberwithin{equation}{section}
\newcommand{\e}{{\,{\rm e}}}
\newcommand{\var}{{\,{\rm Var}}}
\newcommand{\cov}{{\,{\rm Cov}}}
\newcommand{\PP}{\ensuremath{\mathbb{P}}}
\newcommand{\EE}{\ensuremath{\mathbb{E}}}
\newcommand{\N}{\ensuremath{\mathbb{N}}}
\newcommand{\R}{\ensuremath{\mathbb{R}}}
\newcommand{\cC}{\ensuremath{\mathcal{C}}}
\newcommand{\cE}{\ensuremath{\mathcal{E}}}
\newcommand{\cG}{\ensuremath{\mathcal{G}}}
\newcommand{\cS}{\ensuremath{\mathcal{S}}}
\newcommand{\cL}{\ensuremath{\mathcal{L}}}
\newcommand{\cI}{\ensuremath{\mathcal{I}}}
\newcommand{\cJ}{\ensuremath{\mathcal{J}}}
\newcommand{\cQ}{\ensuremath{\mathcal{Q}}}
\newcommand{\I}{\ensuremath{I}}
\newcommand{\G}{\ensuremath{g}}
\newcommand{\bA}{\ensuremath{\mathbf{A}}}
\newcommand{\bG}{\ensuremath{\mathbf{G}}}
\newcommand{\bX}{\ensuremath{\mathbf{X}}}
\newcommand{\bY}{\ensuremath{\mathbf{Y}}}
\def\ind{{\mathbb{I}}}
\newcommand{\tmix}{\ensuremath{t_{\mathrm{mix}}}}
\newcommand{\trel}{\ensuremath{t_{\mathrm{rel}}}}
\newcommand{\tunif}{\ensuremath{t_{\mathrm{unif}}}}
\newcommand{\given}{\, \big| \,}
\renewcommand{\hat}{\widehat}
\newcommand{\est}{\textsc{est}}
\newcommand{\stopp}{\textsc{stop}}
\newcommand{\rw}{\textsc{rw}}
\newcommand{\lrw}{\textsc{lrw}}
\newcommand{\bt}{\mathbf{t}}
\newtheorem{lemma}{Lemma}
\newtheorem{proposition}[lemma]{Proposition}
\newtheorem{corollary}[lemma]{Corollary}
\theoremstyle{definition}
\newtheorem{remark}{Remark}
\newtheorem{algo}{Algorithm}
\newcommand{\eps}{\varepsilon}
\newcommand{\sX}{\mathcal{X}}
\newcommand{\ip}[2]{\langle #1, #2\rangle_\pi}
\newcommand{\one}{{\bf 1}}
\begin{document}

\title{Estimating graph parameters with random walks}

\author[A. Ben-Hamou]{Anna Ben-Hamou}
 \address{A. Ben-Hamou\hfill\break
Sorbonne Universit\'e, LPSM\\
4, place Jussieu\\ 75005 Paris, France.}
\email{anna.ben-hamou@upmc.fr}

\author[R. Oliveira]{Roberto I. Oliveira}
 \address{R. Oliveira\hfill\break
IMPA\\
Estrada Dona Castorina, 110\\ Rio de Janeiro 22460-320, Brazil.}
\email{rimfo@impa.br}

\author[Y. Peres]{Yuval Peres}
\address{Y.\ Peres\hfill\break
Microsoft Research\\ One Microsoft Way\\ Redmond, WA 98052, USA.}
\email{peres@microsoft.com}
\keywords{graph inference, intersections of random walks}
\subjclass[2010]{60J10, 05C81, 05C85, 62M05}
%\date{}

\begin{abstract}
An algorithm observes the trajectories of random walks over an unknown graph $G$, starting from the same vertex $x$, as well as the degrees along the trajectories. For all finite connected graphs, one can estimate the number of edges $m$ up to a bounded factor in $O\left(\trel^{3/4}\sqrt{m/d}\right)$ steps, where $\trel$ is the relaxation time of the lazy random walk on $G$ and $d$ is the minimum degree in $G$. Alternatively, $m$ can be estimated in $O\left(\tunif +\trel^{5/6}\sqrt{n}\right)$, where $n$ is the number of vertices and $\tunif$ is the uniform mixing time on $G$. The number of vertices $n$  can then be estimated up to a bounded factor in an additional $O\left(\tunif\frac{m}{n}\right)$ steps. Our algorithms are based on counting the number of intersections of random walk paths $X,Y$, \emph{i.e.} the number of pairs $(t,s)$ such that $X_t=Y_s$. This improves on previous estimates which only consider collisions (\emph{i.e.,} times $t$ with $X_t=Y_t$). We also show that the complexity of our algorithms is optimal, even when restricting to graphs with a prescribed relaxation time.  Finally, we show that, given either $m$ or the mixing time of $G$, we can compute the ``other parameter'' with a self-stopping algorithm.
\end{abstract}

\maketitle

%%%%%%%%%%%%%%%%%%%%%%%%%%%%%%%%%%%%%%%%%%%%%%%%%%%%%%%%%%%%%%%%%%%%%%%%%%%%%%%%%%
%%%%%%%%%%%%%%%%%%%%%%%%%%%%%%%%%%%%%%%%%%%%%%%%%%%%%%%%%%%%%%%%%%%%%%%%%%%%%%%%%%
\section{Introduction}
%%%%%%%%%%%%%%%%%%%%%%%%%%%%%%%%%%%%%%%%%%%%%%%%%%%%%%%%%%%%%%%%%%%%%%%%%%%%%%%%%%
%%%%%%%%%%%%%%%%%%%%%%%%%%%%%%%%%%%%%%%%%%%%%%%%%%%%%%%%%%%%%%%%%%%%%%%%%%%%%%%%%%

What can one learn from the random walk on a graph long before the graph is fully covered? Our motivation is the analysis of large networks that can contain millions (or even billions) of nodes and edges. Direct manipulation or full observations of such huge graphs are typically impractical. Random-walk-based methods, which are local and lightweight, are often used in dealing with this kind of graph (see \citet{DasSarma2013} and the references therein). Our problem, then, is {\em to determine the least number of random walk steps that are needed to compute interesting graph parameters via random walks.} \footnote{This paper is an extended and improved version of the SODA conference proceeding \citep{soda2018paper}.}

We assume our algorithm has black-box access to $K$ random walks of length $t$ on a graph $G$ starting from the same fixed vertex $x$. It then produces an estimate $\widehat{\gamma}_t$ of a parameter $\gamma=\gamma(G)$ of interest, solely by looking at the traces of the random walks and the vertex degrees along the way. The goal is to achieve
\[\forall t\geq t_0\,:\,\PP_x\left(\left|\frac{\widehat{\gamma}_t}{\gamma(G)} -1 \right|\leq \frac{1}{2}\right)\geq 1-\varepsilon,\]
with $t_0$ as small as possible.  In general, the time complexity parameter $t_0$ will depend on the error parameter $\eps$ and on unknown characteristics of the graph. This leads us to consider the possibility of ``self-stopping'' algorithms that decide on their own when to stop exploring $G$.

%Section~\ref{sec:definitions} defines the \rw\ model and self stopping algorithms more precisely. For now, we point out that our model is {\em one of the most restrictive models for random walk algorithms that actually make sense in real life}. In practical settings, if we can simulate a random walk over a graph $G$, we can most likely restart it at will, and also compute degrees along the way.

\subsection{What we do.}
%In a nutshell, this paper gives optimal algorithms for estimating  the number of vertices $n$ and the number of edges $m$, and nearly optimal algorithms for estimating the mixing time of $G$ from the starting point $x$. For regular graphs $G$, one can estimate $n$ with about $\trel^{3/4}\sqrt{n}$ random walk steps, where $\trel$ is the relaxation time of $G$. For general graphs, our algorithms use about $\min\left\{\trel^{3/4}\sqrt{m/d},\, \trel^{5/6}\sqrt{n}\right\}$ steps to estimate $m$, where $d$ is the minimum degree in $G$. An additional time of order $(m/n)\tunif$ allows to estimate $n$, the number of vertices in general graphs. These complexity bounds on the number of random walk steps are then shown to be optimal.

Let us describe our results in more detail, postponing the precise definition of the model to Section~\ref{sec:definitions}. In Section~\ref{sec:intersections-results}, we build on recent results of \citet{peres2017intersection} and \citet{oliveira-peres} to derive bounds on the number of intersections between two independent random walks $X,Y$, i.e. the number of pairs of times $(t,s)$ with $X_t=Y_s$. Using new bounds from~\citep{oliveira-peres}, we show in particular that if $X$ and $Y$ are two independent lazy random walks on $G$, and if $\tau_{\rm{I}}$ denotes the time of the first intersection between $X$ and $Y$, i.e.
$$
\tau_{\rm{I}}=\inf\left\{t\geq 0,\, \{X_0,\dots,X_t\}\cap \{Y_0,\dots,Y_t\}\neq \emptyset\right\}\, ,
$$
then
$$
\max_{x,y\in V}\EE_{x,y}\tau_{\rm{I}} \lesssim \trel^{3/4}\sqrt{m/d}\,
$$
where $m$ is the number of edges in $G$ and $d$ is the minimum degree.

In Section~\ref{sec:estimating-reg}, we focus on the particular case of regular graphs.  Using intersection counts gives us  a simple algorithm for estimating numbers of vertices $n$ of a regular graph $G$ in $O\left(\trel^{3/4}\sqrt{n}\right)$ random walk steps. Moreover, we prove that this algorithm is optimal. More specifically, for any $n$ and $1\lesssim \bt(n)\lesssim n^2$, we construct a graph $G$ with about $n$ vertices and relaxation time about $\bt(n)$. We then show that any \rw\ algorithm that finds the number of vertices of this graph requires at least $\Omega\left(\bt(n)^{3/4}\sqrt{n}\right)$ time steps.

We then consider arbitrary graphs $G$ in Section~\ref{sec:estimating-non-reg}. In section~\ref{subsec:estimating-edges-general}, we show that the number of edges $m$ of $G$ can be estimated in time of order $\left(\trel^{3/4}\sqrt{m/d}\right)\wedge \left(\tunif +\trel^{5/6} \sqrt{n}\right)$, where $\tunif$ is the uniform mixing time on $G$, and we prove in section~\ref{subsec:lower-bound-edges-general} that the bound $\trel^{5/6}\sqrt{n}$ is tight for the estimation of the number of edges on graphs with any prescribed relaxation time. We then show in section~\ref{subsec:vertices-general} that the bound $\tunif^{5/6}\sqrt{n}$, which suffices to estimate the number of edges, may not be sufficient to estimate the number of vertices. However, provided a good estimate for the number of edges is known, the number of vertices follows from estimation of the mean degree, which can be done in times of order $(m/n)\tunif$. Altogether, the number of vertices in general graphs can be estimated with random walks in time of order
$$
\left(\trel^{3/4}\sqrt{m/d}\right)\wedge\left(\trel^{5/6}\sqrt{n}\right)+\tunif\frac{m}{n}, ,
$$
and this is optimal.

Up to this point all algorithms we described are essentially optimal for our model. They are also space-efficient. They just need to store a single real number and maintain a list of visits to each vertex, which is only read or changed during visits. Another desirable trait of our algorithms is that they run in sub-linear time when the mixing time is small (less than $o(n^{3/5})$). This property of (relatively) fast mixing is expected to hold in social networks \cite{leskovec2009} and other large graphs.

However, our algorithms also suffer from a serious drawback: they are not self-stopping. As it turns out, this is unavoidable. We argue in Section~\ref{sec:no-self-stop} that it is not possible to devise a sublinear stopping time at which one can be reasonably sure that our parameters are well estimated. This is true even if our graph is guaranteed to be $3$-regular and have polylog mixing time. We deduce that, while it may be possible to know the size of a graph after sub-linear time, knowing that we already know the size may take much longer.

We complement these results by showing that if either $m$ or the mixing time is known, the other parameter can be estimated with few steps via a self-stopping algorithm. In Section~\ref{sec:algo-edges}, we show how one can use an upper-bound $\tau$ on the mixing time to compute the number of edges via a self-stopping algorithm with time complexity $O\left(\tau^{3/4}\sqrt{m}\log \log m\right)$ (or $O(\tau^{3/4}\sqrt{n}\log \log n)$ steps if $G$ is regular). Section~\ref{sec:algo-mixing} then presents a result for estimating $t_x(\delta)$, the $\ell_2$-mixing time from $x$, with time complexity $O(t_x(\delta/4)^{3/4}\sqrt{m}\log \log t_x(\delta/4))$, assuming a good estimate for the number of edges is available. A corollary is that both the mixing time from $x$ and the number of edges $m$ can be approximated by a self-stopping algorithm with time complexity $O\left(\tau^{3/4}\sqrt{m}\log \log m\right)$, assuming an upper-bound $\tau$ on the uniform mixing time is available.

\subsection{Background}

Our result relates to the a large body of work on inferring graph (or Markov chain) parameters from random walks. We give here a brief overview of these papers, with a focus on results most closely resembling ours.

In some cases, one has to estimate parameters from a single path of the random walk. One possibility is to use return times to the initial vertex to estimate $n$ or $m$, as proposed by \citet{cooper2014estimating} and \citet{benjamini2006}. Other parameters, such as the spectral gap, may be quite challenging to estimate (see \citet{hsu2015mixing} and \citet{levin2016estimating}). In any case, all of these algorithms require time that is at least of the order of the number of vertices, whereas our own algorithms are sublinear in certain cases.

Another line of work, which is closer to ours, is to consider several, say $k$, random walks started from the same vertex $x$. Typically, estimators in this case rely on collisions of random walks at their endpoints. If each random walk has length greater than the mixing time $\tunif$, then the $k$-sample formed by their endpoints is an independent sample  with nearly stationary distribution over the vertex set. In the case where $G$ is regular, the problem comes down to estimating the size of a finite set through independent uniform samples from that set. It is well-known that counting \emph{collisions} and resorting to the \emph{birthday paradox} allow one to estimate $n$ with order $\sqrt{n}$ samples. The time complexity, measured by the total number of random walk steps, is then of order $\tunif\sqrt{n}$ (the same kind of method was also used by \citet{benjamini2007birthday} to estimate the mixing time of regular graphs). If the graph is not regular, the stationary distribution is no longer uniform, and estimation of the support size can be more challenging (see \citep{bunge1993estimating} and \citep{valiant2011estimating} on support size estimation, and \citep{acharya2015optimal} on the related question of testing closeness between distributions). \citet{katzir2014estimating} showed, through a variant of collision counting, that taking $k=O(\sqrt{n}+m/n)$ suffices to estimate $n$ (if one is willing to use more information about the graph, the bound may be improved to $k=O\left(\|\pi\|_2^{-1} +m/n\right)$, where $\|\pi\|_2$ is the Euclidean norm of the stationary distribution $\pi$). \citet{kanade2017large} established a corresponding lower bound for $k$ in this setting. This yields a time complexity of $\tunif(\sqrt{n}+m/n)$. \citet{kanade2017large} asked whether the factor $\tunif$ in those bounds was really necessary or whether more efficient estimators could be designed. Indeed, in those methods, each unit of information already costs $\tunif$ steps. Can we improve the performance by using the information held by the whole trajectories of walks ? We show that this is indeed the case, and that considering intersections of random walks' paths (instead of collisions at their endpoints) gives strictly more information, and leads to optimal time complexity.

\medskip

 Our results are just a first step towards understanding estimation via random walks. It would be interesting to understand what other graph parameters can be computed efficiently in our model. Extensions of our results to oriented graphs and other models of access to the graph (including distributed access as in \cite{DasSarma2013}) would also be worthwhile.

%%%%%%%%%%%%%%%%%%%%%%%%%%%%%%%%%%%%%%%%%%%%%%%%%%%%%%%%%%%%%%%%%%%%%%%%%%%%%%%%%%
%%%%%%%%%%%%%%%%%%%%%%%%%%%%%%%%%%%%%%%%%%%%%%%%%%%%%%%%%%%%%%%%%%%%%%%%%%%%%%%%%%
\section{Notation and definitions}\label{sec:definitions}
%%%%%%%%%%%%%%%%%%%%%%%%%%%%%%%%%%%%%%%%%%%%%%%%%%%%%%%%%%%%%%%%%%%%%%%%%%%%%%%%%%
%%%%%%%%%%%%%%%%%%%%%%%%%%%%%%%%%%%%%%%%%%%%%%%%%%%%%%%%%%%%%%%%%%%%%%%%%%%%%%%%%%

Let $G=(V,E)$ be a finite connected graph on $n$ vertices and $m$ edges. For $u\in V$, we let $\deg(u)=|\{v\in V, \{u,v\}\in E\}|$ be the degree of $u$.

\subsection*{Random walks and estimators}

Our estimators take as input trajectories of random walks, along with the degrees of visited vertices. However, they do not rely on a particular vertex labeling. To make this more precise, we introduce the  \emph{profile} of a sequence of vertices. For $t\geq 1$ and for a sequence of vertices $\mathbf{u}_t=(u_0,\dots,u_{t-1})\in V^{t}$, let $r(\mathbf{u}_t)$ be the length-$t$ sequence where each vertex is replaced by the index of its first occurrence  in $\mathbf{u}_t$. For instance, the image of the sequence $(g,a,a,c,g,d,a,b,d)$ by $r$ is $(1,2,2,3,1,4,2,5,4)$. Note that $r$ is invariant under vertex-relabeling. The profile $\Phi$ of $\mathbf{u}_t$ is then defined as
\[
\Phi(\mathbf{u}_t)=\Big(r(\mathbf{u}_t),\big(\deg(u_i)\big)_{i=0}^{t-1}\Big)\, .
\]
In other words, for each finite length sequence of vertices $\mathbf{u}_t$, the function $\Phi$ captures the ranks of occurrence and the degrees, and takes values in
\[
\cS = \bigcup_{t\geq 1}\N^{2t} \, .
\]

Now let $x\in V$ be some fixed vertex. An \emph{estimator} is a function $\est:\cS\to\R$, which takes as input the profile of the trajectories of $K$ independent lazy random walks (\lrw) of length $t$, all started at $x$. More precisely, for integers $K, t\geq 1$, let $X^{(1)},\dots, X^{(K)}$ be $K$ independent \lrw\ on $G$ started at $x$, and define $\bX_t^{(i)}=(X_0^{(i)},\dots,X_{t-1}^{(i)})$, the trajectory of $X^{(i)}$ up to time $t-1$, for $i=1,\dots,K$.  Letting $\gamma(G)$ be some parameter of interest (\emph{e.g.} $\gamma(G)=n$ or $\gamma(G)=m$), the goal is to produce a map $\est:\cS\to \R$, returning the value
$$
\hat{\gamma}_{K,t}=\est\left(\Phi\left(\bX_t^{(1)},\dots,\bX_t^{(K)}\right)\right)\, ,
$$
 such that, for all connected graph $G=(V,E)$, for all $x\in V$, for all $t\geq t(\varepsilon,G)$ and $K\geq K(\varepsilon,G)$,
\begin{equation}
\label{eq:def-good-est}
\PP_x\left(\Big| \frac{\hat{\gamma}_{K,t}}{\gamma(G)}-1 \Big| >\frac{1}{2}\right)\leq \varepsilon \, ,
\end{equation}
for $t(\varepsilon,G)\times K(\varepsilon,G)$ as small as possible. The product $t(\varepsilon,G)\times K(\varepsilon,G)$ corresponds to the total number of random walk steps and will often be referred to as the time complexity of the estimator. Let us point out right away that, in our estimation procedures, the critical quantity will be $t(\varepsilon,G)$, the random walks' length, rather than $K(\varepsilon,G)$, the number of random walks, which will simply be chosen according to the desired precision~$\varepsilon$.

\subsection*{Convergence of random walks}

To study the large-time behavior of our estimators, it is natural to take advantage of the convergence of \lrw\ to its stationary distribution $\pi$, given by $\pi(u)=\deg(u)/2m$. Denote by $\tunif$ the uniform mixing time defined as
$$
\tunif = \inf\left\{t\geq 0,\;\max_{x,y\in V} \left|\frac{P^t(x,y)}{\pi(y)}-1\right|\leq \frac{1}{4}\right\}\, ,
$$
Also, letting $1=\lambda_1>\lambda_2\geq\dots\geq \lambda_n\geq 0$ be the eigenvalues of $P$ in decreasing order (the fact that all eigenvalues are non-negative is by laziness of the walk), the relaxation time is defined as
$$
\trel = \left\lceil \frac{1}{1-\lambda_2} \right\rceil \, \cdot
$$

%\subsection*{Lower bounds}
%
%The lower bound problem can be formalized as follows: we say that $t(G)$ is a lower bound for the estimation of $\gamma(G)$ if there exists $\delta>0$ such that, for all estimator \est, there exists an infinite sequence of graphs $G$ and $x\in V(G)$ such that for all $t,K\geq 1$ satisfying $tK\leq \delta t(G)$,
%$$
%\PP_x\left(\Big| \frac{\hat{\gamma}_{K,t}}{\gamma(G)}-1 \Big| >\frac{1}{2}\right)\geq \frac{1}{4} \, .
%$$
%
%To obtain more refined lower bound, one may further require that all graphs in the infinite sequence belong to some specified class.

\subsection*{Self-stopping algorithms}

The time $t(\varepsilon,G)$ above which inequality~\eqref{eq:def-good-est} holds usually depends on unknown parameters of the graph, possibly on $\gamma(G)$ itself. This prompts the search for \emph{self-stopping} algorithms, i.e. algorithms which automatically stop at some random time, according to what has been seen so far. One then needs to control both the error probability for the returned value, and the expected value of the stopping time (see Sections~\ref{sec:no-self-stop}, \ref{sec:algo-edges} and \ref{sec:algo-mixing}).

%%%%%%%%%%%%%%%%%%%%%%%%%%%%%%%%%%%%%%%%%%%%%%%%%%%%%%%%%%%%%%%%%%%%%%%%%%%%%%%%%%
%%%%%%%%%%%%%%%%%%%%%%%%%%%%%%%%%%%%%%%%%%%%%%%%%%%%%%%%%%%%%%%%%%%%%%%%%%%%%%%%%%
\section{Intersections of random walks}
\label{sec:intersections-results}
%%%%%%%%%%%%%%%%%%%%%%%%%%%%%%%%%%%%%%%%%%%%%%%%%%%%%%%%%%%%%%%%%%%%%%%%%%%%%%%%%%
%%%%%%%%%%%%%%%%%%%%%%%%%%%%%%%%%%%%%%%%%%%%%%%%%%%%%%%%%%%%%%%%%%%%%%%%%%%%%%%%%%

We start by some results on intersections of random walks' trajectories.

For $X$ and $Y$ two independent \lrw\ on a finite connected graph $G=(V,E)$, the number of intersections between $X$ and $Y$ up to time $t-1$ is defined as
$$
\I_t=\sum_{i=0}^{t-1}\sum_{j=0}^{t-1} \ind_{\{X_i=Y_j\}}\, .
$$
For non-regular graphs, a more relevant quantity is the \emph{weighted} number of intersections, defined as
$$
\cI_t=\sum_{i,j=0}^{t-1} \frac{1}{\deg(X_i)}\ind_{\{X_i=Y_j\}}\, .
$$
When $X$ and $Y$ start at $x$ and $y$ respectively, the probability law will be denoted $\PP_{x,y}$ and the corresponding expectation $\EE_{x,y}$. When $x=y$, we just write $\PP_x$ and $\EE_x$. Let $P$ be the transition matrix of $X$ and
$$
\G_t(x,u)=\sum_{i=0}^{t-1} P^i(x,u)
$$
be the expected number of visits to vertex $u$ before time $t$ (also known as the Green's function). We have
\begin{equation}\label{eq:intersection-green}
\EE_x \cI_t =\sum_{u\in V} \frac{\G_t(x,u)^2}{\deg(u)}\, .
\end{equation}
The expected number of intersections is intimately related to return probabilities. Indeed, by reversibility, $\deg(x)\G_t(x,u)=\deg(u)\G_t(u,v)$ and we get
\begin{equation}\label{eq:intersection-return}
\EE_x \cI_t =\sum_{i,j=0}^{t-1} \frac{P^{i+j}(x,x)}{\deg(x)}\, \cdot
\end{equation}
We also define $\cJ_t$ to be the weighted number of intersections counted from the mixing time $\tunif$, i.e.
$$
\cJ_t=\sum_{i,j=\tunif}^{\tunif +t-1}\frac{1}{\deg(X_i)}\ind_{\{X_i=Y_j\}}\, .
$$

\begin{proposition}
\label{prop:intersections}
For all finite connected graph $G=(V,E)$ with $m$ edges, minimum degree $d$ and relaxation time $\trel$, for all $x\in V$,
\begin{equation}\label{eq:prop1-It-exp}
\frac{t^2}{2m}\leq \EE_x \cI_t \leq \frac{t^2}{2m} +\frac{16\trel^{3/2}}{d}\, ,
\end{equation}
and
\begin{equation}\label{eq:prop1-It-var}
\EE_x \cI_t^2 \leq 4 \left(\max_{a\in V} \EE_a\cI_t \right)\EE_x \cI_t\, .
\end{equation}
\end{proposition}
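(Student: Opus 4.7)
The proof naturally splits into the three claims of the proposition, and the identity~\eqref{eq:intersection-green} puts them on very concrete footing. For the lower bound in~\eqref{eq:prop1-It-exp}, I would simply apply Cauchy--Schwarz to \eqref{eq:intersection-green}: since $\sum_u \G_t(x,u) = t$ and $\sum_u \deg(u) = 2m$,
\[
t^2 \;=\; \Bigl(\sum_u \G_t(x,u)\Bigr)^2 \;\leq\; \Bigl(\sum_u \frac{\G_t(x,u)^2}{\deg(u)}\Bigr)\Bigl(\sum_u \deg(u)\Bigr) \;=\; 2m \,\EE_x \cI_t .
\]

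For the matching upper bound, write $\G_t(x,u) = t\pi(u) + r_t(x,u)$ with $r_t(x,u) := \sum_{k=0}^{t-1}(P^k(x,u)-\pi(u))$ and expand the square in \eqref{eq:intersection-green}. Since $\pi(u)/\deg(u) = 1/(2m)$ does not depend on $u$, and since $\sum_u r_t(x,u) = \sum_u \G_t(x,u) - t\sum_u \pi(u) = 0$, the cross-term vanishes, leaving the clean identity
\[
\EE_x \cI_t - \frac{t^2}{2m} \;=\; \sum_u \frac{r_t(x,u)^2}{\deg(u)} .
\]
The heart of the argument---and the main obstacle---is controlling this corrector by $16\,\trel^{3/2}/d$. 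The naive spectral estimate $(1-\lambda_i^t)/(1-\lambda_i) \leq \trel$ together with $\sum_{i\geq 2} \phi_i(x)^2 \leq 1/\pi(x)$ gives only $\trel^{2}/d$; the improved exponent $3/2$ therefore has to be imported from the refined Green's-function bounds of \citep{oliveira-peres}.

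The second moment bound \eqref{eq:prop1-It-var} proceeds by a clean local-time calculation. Writing $\cI_t = \sum_u L_t^X(u) L_t^Y(u)/\deg(u)$ with $L_t^Z(u) := \sum_{i<t}\ind_{\{Z_i = u\}}$ and using independence of $X,Y$ (both issued from $x$),
\[
\EE_x \cI_t^2 \;=\; \sum_{u,v} \frac{M(u,v)^2}{\deg(u)\deg(v)}, \qquad M(u,v) \;:=\; \EE_x\bigl[L_t^X(u) L_t^X(v)\bigr].
\]
Splitting $M(u,v) = \sum_{i,j<t}\PP_x(X_i = u,\,X_j = v)$ according to $i<j$, $j<i$, $i=j$ and applying the Markov property yields the uniform bound
\[
M(u,v) \;\leq\; \G_t(x,u)\G_t(u,v) + \G_t(x,v)\G_t(v,u),
\]
the $i=j$ diagonal being absorbed in the $u=v$ case. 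Squaring via $(a+b)^2 \leq 2(a^2+b^2)$, summing over $v$ first with $\sum_v \G_t(u,v)^2/\deg(v) = \EE_u \cI_t$ (again by~\eqref{eq:intersection-green}), and exploiting the $u\leftrightarrow v$ symmetry to combine the two terms then produces
\[
\EE_x \cI_t^2 \;\leq\; 4 \sum_u \frac{\G_t(x,u)^2}{\deg(u)}\, \EE_u\cI_t \;\leq\; 4 \bigl(\max_{a\in V} \EE_a\cI_t\bigr) \EE_x\cI_t,
\]
as required.
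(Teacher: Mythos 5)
Your treatment of the second moment bound \eqref{eq:prop1-It-var} is essentially the paper's proof, and your lower bound via Cauchy--Schwarz is a valid and in fact slightly cleaner alternative: the paper instead notes that, by laziness, each term $P^{i+j}(x,x)/\pi(x) - 1$ in the rewriting of $\EE_x\cI_t - t^2/(2m)$ is non-negative via the spectral decomposition, whereas your Cauchy--Schwarz argument makes no use of laziness at all.

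The upper bound, however, is left with a genuine gap, which you yourself flag. You correctly derive the identity $\EE_x\cI_t - t^2/(2m) = \sum_u r_t(x,u)^2/\deg(u)$, which a short computation shows is equal to $\frac{1}{2m}\sum_{i,j=0}^{t-1}\big(P^{i+j}(x,x)/\pi(x) - 1\big)$, the paper's form of the corrector. But ``import the Green's-function bound'' does not by itself yield $\trel^{3/2}/d$: Lemma~\ref{lem:bound-green} only controls a single Green's function $\G_{\trel}(x,x)$, whereas what you need to bound is a \emph{double} sum over $(i,j)$ weighted by $\min(s+1, 2t-1-s)$ after regrouping by $s=i+j$. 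The missing ingredient is the paper's Lemma~\ref{lem:sumfint2}, which shows that for any $f$ (applied here to $f = \ind_{\{\cdot = x\}}/\pi(x)$),
\[
\sum_{s\ge 0}(s+1)\Big(\ip{f}{P^s f}-\ip{f}{\one}^2\Big) \;\le\; \frac{\trel}{(1-1/\e)^2}\sum_{s=0}^{\trel-1}\Big(\ip{f}{P^s f}-\ip{f}{\one}^2\Big),
\]
i.e.\ the linearly weighted infinite sum is at most $\trel$ times the unweighted partial sum up to $\trel$. It is this trade of the weight $(s+1)$ for a factor of $\trel$ that, combined with $\sum_{s<\trel}\big(P^s(x,x)/\pi(x)-1\big) \le \G_{\trel}(x,x)/\pi(x) \le 12m\sqrt{\trel}/d$, produces the exponent $3/2$. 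Without some statement of this kind, knowing $\G_{\trel}(x,x)\lesssim \deg(x)\sqrt{\trel}/d$ is not enough to conclude.
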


\begin{proposition}
\label{prop:intersections-mix}
For all finite connected graph $G=(V,E)$ with $m$ edges, $n$ vertices and relaxation time $\trel$, for all $x\in V$,
\begin{equation}\label{eq:prop2-Jt-exp}
\left(\frac{3}{4}\right)^2\frac{t^2}{2m}\leq \EE_x \cJ_t \leq \left(\frac{5}{4}\right)^2 \frac{t^2}{2m} \, ,
\end{equation}
and
\begin{equation}\label{eq:prop2-Jt-var}
\EE_x \cJ_t^2 \lesssim \frac{t^2}{m^2}\left(t^2 +n\trel^{5/3}\right)\, .
\end{equation}
\end{proposition}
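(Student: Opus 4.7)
The plan is to prove the expectation bound \eqref{eq:prop2-Jt-exp} by a direct first-moment computation and then tackle the second-moment bound \eqref{eq:prop2-Jt-var} via an expansion, a mixing-time substitution, and a spectral trace estimate. For the first moment, I would expand
$\EE_x \cJ_t = \sum_{i,j=\tunif}^{\tunif+t-1}\sum_{u \in V}P^i(x,u)P^j(x,u)/\deg(u)$
using linearity and the independence of $X$ and $Y$. The definition of $\tunif$ gives $(3/4)\pi(u) \le P^i(x,u) \le (5/4)\pi(u)$ for all $i \ge \tunif$ and $u \in V$, so each summand is sandwiched between $(3/4)^2\pi(u)^2/\deg(u)$ and $(5/4)^2\pi(u)^2/\deg(u)$. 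Summing over $u$ and using the identity $\sum_u \pi(u)^2/\deg(u) = 1/(2m)$ then yields \eqref{eq:prop2-Jt-exp}.

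For the second moment, independence of $X$ and $Y$ (which have the same law $\PP_x$) gives $\EE_x \cJ_t^2 = \sum_{u,v}A(u,v)^2/(\deg(u)\deg(v))$ with $A(u,v):=\sum_{i,j=\tunif}^{\tunif+t-1}\PP_x(X_i=u,X_j=v)$. Splitting $A$ according to whether $i\le j$ or $i>j$, applying the Markov property together with the mixing estimate $P^k(x,\cdot)\le (5/4)\pi(\cdot)$ for $k\ge \tunif$, and using reversibility to align the two contributions, yields $A(u,v) \le (5/4)\pi(u) W(u,v)$, where $W(u,v):=\sum_{i,j=0}^{t-1} P^{|i-j|}(u,v)$. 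By symmetry the same bound holds with $u,v$ swapped, so $A(u,v)^2 \le (25/16)\pi(u)\pi(v) W(u,v) W(v,u)$. Plugging in and using $\pi(u)/\deg(u)=1/(2m)$, the task reduces to bounding $\sum_{u,v} W(u,v)W(v,u)$, which by expanding $W$ and using reversibility to collapse the inner $v$-sum to a diagonal equals $\sum_{k,\ell=0}^{t-1}N(k)N(\ell)\,\mathrm{Tr}(P^{k+\ell})$, with $N(k):=t\ind_{k=0}+2(t-k)\ind_{k\ge 1}$.

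Writing $\mathrm{Tr}(P^m)=1+\sum_{j\ge 2}\lambda_j^m$, the constant-$1$ part contributes exactly $\bigl(\sum_k N(k)\bigr)^2 = t^4$, producing the $t^4$ term in the claim. The remaining spectral sum is $\sum_{j\ge 2}\bigl(\sum_k N(k)\lambda_j^k\bigr)^2$, which is crudely at most $t^2\sum_{j\ge 2}\min(t,T_j)^2$ with $T_j:=1/(1-\lambda_j)\le \trel$. The main obstacle is to upgrade the naive estimate $\sum_{j\ge 2}\lambda_j^m \le n\lambda_2^m\le n e^{-m/\trel}$, which only delivers $t^2\cdot n\trel^2$, to the sharper $t^2\cdot n\trel^{5/3}$. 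For this one needs a graph-specific heat-kernel-type bound on $\mathrm{Tr}(P^m)-1$ that interpolates between the short-time regime (where trivially $\mathrm{Tr}(P^m)\le n$) and the long-time exponential decay; balancing the two regimes at the correct scale in the double sum over $(k,\ell)$ is what produces the exponent $5/3$, mirroring the way $\trel^{3/2}$ arose from a similar optimization in Proposition~\ref{prop:intersections}.
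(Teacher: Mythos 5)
Your reduction of the second moment is structurally sound and, after simplification, lands on essentially the same quantity the paper bounds: you arrive at
\[
\EE_x\cJ_t^2 \lesssim \frac{1}{m^2}\sum_{k,\ell=0}^{t-1}N(k)N(\ell)\,\mathrm{Tr}\bigl(P^{k+\ell}\bigr)
\;\lesssim\; \frac{t^2}{m^2}\sum_{k,\ell=0}^{t-1}\mathrm{Tr}\bigl(P^{k+\ell}\bigr),
\]
which is exactly $\frac{t^2}{m^2}\sum_{i,j=0}^{t-1}\sum_u P^{i+j}(u,u)$, the quantity the paper reaches via the Green's function identity $\sum_y\pi(y)\G_t(y,u)=t\pi(u)$. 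Your first-moment argument (pointwise $\tfrac34\pi\le P^{i}(x,\cdot)\le\tfrac54\pi$ for $i\ge\tunif$, then $\sum_u\pi(u)^2/\deg(u)=1/2m$) is also correct and slightly more direct than the paper's detour through $\EE_{y,z}\cI_t$. So the skeleton is fine.

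The gap is in the final, and decisive, step. You say the exponent $5/3$ comes from ``a graph-specific heat-kernel-type bound on $\mathrm{Tr}(P^m)-1$ that interpolates between the short-time regime (where trivially $\mathrm{Tr}(P^m)\le n$) and the long-time exponential decay.'' That interpolation does \emph{not} produce $n\trel^{5/3}$. If you only know $\mathrm{Tr}(P^s)-1\le n$ for $s<\trel$ and $\mathrm{Tr}(P^s)-1\le n\,\e^{-s/\trel}$ for $s\ge\trel$, then $\sum_{s\ge 0}(s+1)(\mathrm{Tr}(P^s)-1)\lesssim n\trel^2$, which is exactly the $n\trel^2$ you already called ``naive.'' What the paper actually uses is the Lyons--Oveis Gharan trace inequality (Lemma~\ref{lem:lyons-oveis}), $\sum_u P^s(u,u)\le 1+13n/(s+1)^{1/3}$, a genuinely polynomial short-time decay that is strictly stronger than the trivial bound $n$ for all $s\ge 1$; combined with the $\trel$-block partitioning trick of Lemma~\ref{lem:sumfint2}, this gives $\sum_{i,j=0}^{t-1}(\mathrm{Tr}(P^{i+j})-1)\lesssim \trel\sum_{s<\trel}n(s+1)^{-1/3}\lesssim n\trel^{5/3}$. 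Neither ingredient is in your proposal, and the crossover you describe cannot substitute for them. (Also note, for your closing analogy: the $\trel^{3/2}$ in Proposition~\ref{prop:intersections} comes from the Green's-function bound of Lemma~\ref{lem:bound-green}, not from a trace interpolation, so that precedent does not supply the missing ingredient either.)
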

Here and throughout the paper, for two functions $f,g$, the notation $f(n)\lesssim g(n)$ means that there exists an absolute constant $C>0$ such that $f(n)\leq C g(n)$ for all $n\geq 1$.

Before proving Proposition~\ref{prop:intersections} and~\ref{prop:intersections-mix}, let us state three useful results. The following bound on the Green's function was established by~\citep{oliveira-peres}.

\begin{lemma}[\citep{oliveira-peres}, Lemma 2]
\label{lem:bound-green}
Let $X$ be a \lrw\ on $G$. For all $x\in V$, for all $1\leq t\leq \frac{36m^2}{d}$,
$$
\G_t(x,x)\leq \frac{6\deg(x)\sqrt{t}}{d}\, \cdot
$$
\end{lemma}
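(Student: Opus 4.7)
Proof plan: I would prove Lemma~\ref{lem:bound-green} in two steps: first establish a pointwise on-diagonal heat kernel bound of the form $P^s(x,x) \lesssim \deg(x)/(d\sqrt{s})$ in the admissible range, and then sum it to obtain the Green's function estimate.

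A natural reduction uses reversibility. Writing $P^{2s}(x,x) = \sum_y P^s(x,y) P^s(y,x)$ and using $\deg(x) P^s(x,y) = \deg(y) P^s(y,x)$, one obtains
\[
P^{2s}(x,x) = \deg(x)\sum_y \frac{P^s(x,y)^2}{\deg(y)} \leq \frac{\deg(x)}{d}\sum_y P^s(x,y)^2 = \frac{\deg(x)}{d}\|P^s(x,\cdot)\|_2^2\, .
\]
This already extracts the $\deg(x)/d$ factor appearing in the statement and reduces the problem to the $\ell^2$-decay $\|P^s(x,\cdot)\|_2^2 \lesssim 1/\sqrt{s}$ for $s$ up to order $m^2/d$. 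This is a standard Nash-type heat kernel estimate, provable via a discrete Nash inequality for the Dirichlet form of the lazy walk applied to $f = P^s(x,\cdot)$. The exponent $1/2$ is the universal worst-case decay rate (attained on path graphs), and the range $s \lesssim m^2/d$ is the pre-mixing regime where this decay is nontrivial.

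Monotonicity then finishes the proof. The spectral expansion $P^s(x,x) = \pi(x)\sum_i \lambda_i^s \phi_i(x)^2$ (with $\{\phi_i\}$ an $L^2(\pi)$-orthonormal eigenbasis) combined with $\lambda_i \in [0,1]$ for the lazy chain shows that $s \mapsto P^s(x,x)$ is nonincreasing, so in particular $P^{2s+1}(x,x) \leq P^{2s}(x,x)$. Consequently,
\[
\G_t(x,x) = \sum_{s=0}^{t-1} P^s(x,x) \leq 1 + 2\sum_{s=1}^{\lceil t/2\rceil} P^{2s}(x,x) \leq 1 + \frac{C \deg(x)}{d}\sum_{s=1}^{t}\frac{1}{\sqrt{s}} \leq \frac{6\deg(x)\sqrt{t}}{d}\, ,
\]
after using $\deg(x)/d \geq 1$ to absorb the additive constant and tuning $C$ suitably.

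The main obstacle is obtaining the Nash-type inequality with constants sharp enough to yield the specific numerical values ($6$ and $36m^2/d$) in the statement. The exponent $1/2$ is universal, but extracting the leading constant requires a careful analysis of the worst-case conductance profile for graphs with minimum degree $d$ and $m$ edges. The cut-off $t \leq 36m^2/d$ is natural: beyond it, $P^s(x,x)$ is dominated by the stationary value $\pi(x) = \deg(x)/(2m)$ and the heat-kernel bound no longer improves on the trivial $\G_t(x,x) \leq t\pi(x) + O(\trel)$ estimate.
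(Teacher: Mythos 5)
The paper does not prove this lemma; it is imported verbatim (as Lemma 2) from \citet{oliveira-peres}, so there is no internal proof to compare against. Judged on its own, your proposal has a genuine gap at its central step. The reversibility reduction $P^{2s}(x,x)=\deg(x)\sum_y P^s(x,y)^2/\deg(y)\leq \frac{\deg(x)}{d}\sum_y P^s(x,y)^2$ is correct, but the claimed Nash-type decay $\sum_y P^s(x,y)^2\lesssim 1/\sqrt{s}$ for $s$ up to order $m^2/d$ is simply false for general graphs. Take the star on $n$ vertices and let $x$ be a leaf. Then $d=1$, $m=n-1$, so $36m^2/d=36(n-1)^2$, while after $O(1)$ lazy steps $P^s(x,\cdot)$ is essentially stationary and $\sum_y P^s(x,y)^2\to \sum_y\pi(y)^2\geq \pi(\text{center})^2=1/4$. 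Thus the unweighted $\ell^2$-norm does not decay at all; it stabilises at a constant, and for $s$ of order $n^2$ (well inside the admissible range) your bound would demand $\sum_y P^s(x,y)^2\lesssim 1/n$, off by a factor of $\Theta(n)$.

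The diagnosis is that dropping the $1/\deg(y)$ weight in $\sum_y P^s(x,y)^2/\deg(y)$ in favour of the global worst-case $1/d$ discards exactly the information that saves the lemma: the vertices carrying most of the $\ell^2$ mass (here, the hub of the star) are also the high-degree vertices, and the $1/\deg(y)$ factor kills their contribution. Any correct proof must work with the degree-weighted quantity $P^{2s}(x,x)/\deg(x)=\sum_y P^s(x,y)^2/\deg(y)$ directly (equivalently, the $\ell^2(\pi)$-norm of the density $P^s(x,\cdot)/\pi$), and cannot pass through an unweighted Nash inequality that is tight only on degree-bounded graphs such as paths. Your observation that the exponent $1/2$ is extremal on paths is the source of the error: for paths all degrees are comparable to $d$, so the reduction is lossless there, but that is precisely the case where nothing is lost. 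Finally, as a minor remark, you may want to double-check the admissible range: since $\G_t(x,x)\geq t\pi(x)$ always, the stated bound $6\deg(x)\sqrt{t}/d$ can be compatible with this trivial lower bound only for $t\lesssim m^2/d^2$, which suggests the upper cutoff in the hypothesis should be read accordingly when reconstructing a proof.
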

By~\citep{oliveira-peres}, Proposition 1, we have
\begin{equation}\label{eq:bound-trel}
\trel \leq \frac{12mn}{d}\, \cdot
\end{equation}
In particular, the bound of Lemma~\ref{lem:bound-green} is valid up to $\trel$.
The following powerful result on the sum of return probabilities was established by \citet{lyons2012sharp}.

\begin{lemma}[\citep{lyons2012sharp}]
\label{lem:lyons-oveis}
For a lazy random walk $X$ on $G$, for all $t\geq 0$,
$$
\sum_{u\in V} P^t(u,u)\leq 1+\frac{13n}{(t+1)^{1/3}}\, \cdot
$$
\end{lemma}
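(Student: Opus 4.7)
The plan is to exploit reversibility of the lazy chain to recast the sum of return probabilities as a spectral trace, and then to apply the eigenvalue distribution bound for reversible Markov chains due to \citet{lyons2012sharp}.

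Since $P$ is lazy and reversible, its eigenvalues $1=\lambda_1\geq \lambda_2\geq \cdots\geq \lambda_n\geq 0$ are real and nonnegative, and the spectral theorem gives
\[
\sum_{u\in V} P^{t}(u,u) \;=\; \operatorname{tr}(P^{t}) \;=\; \sum_{k=1}^{n} \lambda_k^{t} \;=\; 1 + \sum_{k=2}^{n} \lambda_k^{t}.
\]
This reduces everything to the bound $\sum_{k=2}^{n} \lambda_k^{t} \leq 13n/(t+1)^{1/3}$.

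The core ingredient, which is the substance of \citep{lyons2012sharp}, is a universal bound on the spectral counting function of reversible chains: for every reversible lazy chain on $n$ states,
\[
1 - \lambda_{k} \;\gtrsim\; \left(\frac{k}{n}\right)^{3},
\]
equivalently $|\{k:\lambda_k\geq 1-\epsilon\}|\lesssim n\epsilon^{1/3}$. Lyons and Oveis Gharan establish this via a higher-order Cheeger-type argument: starting from the top $k$ eigenfunctions, a spectral embedding into $\R^{k-1}$ combined with a transportation/multiway partitioning argument produces $k$ disjoint subsets each of controlled conductance, which in turn forces a lower bound on $1-\lambda_k$.

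Given this spectral input, the remainder is a short analytic computation. Using $\lambda_k\leq e^{-(1-\lambda_k)}$ together with the above eigenvalue bound,
\[
\sum_{k=2}^{n} \lambda_k^{t} \;\leq\; \sum_{k=2}^{n} \exp\bigl(-c\,t\,(k/n)^{3}\bigr) \;\lesssim\; n\int_{0}^{\infty} e^{-ctu^{3}}\,du \;=\; \frac{n\,\Gamma(1/3)}{3(ct)^{1/3}}.
\]
A more careful tracking of the numerical constants in Lyons--Oveis Gharan, and absorbing the case $t=0$ via the shift $(t+1)^{1/3}$ (trivially, $\sum_u P^0(u,u)=n$, so the bound must survive at $t=0$), yields the explicit constant $13$.

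The main obstacle is by far the middle step: the eigenvalue count $|\{k:\lambda_k\geq 1-\epsilon\}|\lesssim n\epsilon^{1/3}$ with the correct exponent $1/3$ is the delicate part and requires the full strength of the spectral-embedding machinery in \citep{lyons2012sharp}; the first and third steps are routine spectral/analytic manipulations.
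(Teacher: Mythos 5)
The paper does not actually prove Lemma~\ref{lem:lyons-oveis}: it imports it verbatim from \citep{lyons2012sharp}, so there is no internal argument to compare against. Your outline correctly reflects the architecture of the cited proof --- the trace identity $\sum_u P^t(u,u)=\sum_k \lambda_k^t$, an eigenvalue-counting bound with exponent $3$, and the elementary estimate $\lambda^t\leq e^{-t(1-\lambda)}$ followed by a sum-to-integral computation --- and, like the paper itself, you delegate the only substantive step to the spectral-embedding machinery of \citep{lyons2012sharp}. In that sense the proposal is a faithful sketch of the cited result rather than a self-contained proof, which is all the paper offers as well.

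There is, however, a genuine error in how you state the key ingredient: the bound $1-\lambda_k\gtrsim (k/n)^3$ does \emph{not} hold ``for every reversible lazy chain on $n$ states.'' For general reversible chains the spectral gap can be made arbitrarily small with $n$ fixed (take a lazy two-state chain with crossing probability $\varepsilon$: here $n=2$ but $1-\lambda_2=\varepsilon$), so no bound depending only on $k$ and $n$ can be true; the same failure occurs for walks on weighted graphs with small edge weights. The theorem of Lyons and Oveis Gharan is specific to simple (lazy) random walk on an unweighted graph: the higher-order Cheeger/spectral-embedding argument uses that every vertex set has conductance bounded below in terms of its degrees, i.e., that each edge carries weight at least $1$. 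Since Lemma~\ref{lem:lyons-oveis} concerns a lazy random walk on a graph $G$, your application is legitimate, but the intermediate claim must be quoted in that restricted generality; as written, the middle step of your argument is false, and this is precisely where the graph structure (and not mere reversibility plus laziness) enters.
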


Finally, we also need the following lemma.

\begin{lemma}
\label{lem:sumfint2}
For any $f\in \R^\sX$, if $P$ is reversible, irreducible and has non-negative spectrum, then
$$
\sum_{s=0}^{+\infty} (s+1)\left(\ip{f}{P^{s}f} - \ip{f}{\one}^2\right)\leq \frac{\trel}{(1-1/\e)^2}\sum_{s=0}^{\trel -1} [\ip{f}{P^{s}f} - \ip{f}{\one}^2]\, .
$$
\end{lemma}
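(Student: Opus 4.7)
The plan is to diagonalize and reduce to a scalar inequality. Since $P$ is reversible, irreducible and has non-negative spectrum, it has an orthonormal (with respect to $\langle \cdot, \cdot\rangle_\pi$) basis of eigenvectors $\phi_1=\one,\phi_2,\dots,\phi_n$ with real eigenvalues $1=\lambda_1>\lambda_2\geq \cdots\geq \lambda_n\geq 0$. Writing $f=\sum_{i}a_i\phi_i$, one gets
$$\ip{f}{P^s f}=\sum_i a_i^2\lambda_i^s,\qquad \ip{f}{\one}^2=a_1^2,$$
so both sides of the claimed inequality are non-negative linear combinations (with coefficients $a_i^2$) of expressions involving $\lambda_i$ for $i\geq 2$. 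It suffices therefore to prove the inequality term by term, i.e., to show that for each $\lambda\in[0,\lambda_2]$,
$$\sum_{s=0}^{\infty}(s+1)\lambda^s\leq \frac{\trel}{(1-1/\e)^2}\sum_{s=0}^{\trel-1}\lambda^s.$$

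Using $\sum_{s\geq 0}(s+1)\lambda^s=1/(1-\lambda)^2$ and $\sum_{s=0}^{\trel-1}\lambda^s=(1-\lambda^{\trel})/(1-\lambda)$, this is equivalent to the elementary scalar inequality
$$(1-1/\e)^2\leq \trel\,(1-\lambda)\,(1-\lambda^{\trel}).$$
This will be established by bounding each of the two factors on the right. On one hand, by definition $\trel\geq 1/(1-\lambda_2)$, so $\trel(1-\lambda)\geq \trel(1-\lambda_2)\geq 1$. On the other hand, $\lambda\leq\lambda_2\leq 1-1/\trel$ gives $\lambda^{\trel}\leq (1-1/\trel)^{\trel}$, and since the sequence $k\mapsto (1-1/k)^k$ is increasing to $1/\e$, we have $\lambda^{\trel}\leq 1/\e$, hence $1-\lambda^{\trel}\geq 1-1/\e$. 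Multiplying, $\trel(1-\lambda)(1-\lambda^{\trel})\geq 1-1/\e\geq (1-1/\e)^2$, as desired.

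The only real ``obstacle'' is verifying that the constants work out — in particular, ensuring that the uniform bound $\lambda^{\trel}\leq 1/\e$ follows from the definition of $\trel$ as a ceiling rather than a true reciprocal of the spectral gap, and that the monotonicity of $(1-1/k)^k$ gives the bound cleanly for every integer $\trel\geq 1$. Once the term-by-term reduction is in place, summing against the non-negative weights $a_i^2$ over $i\geq 2$ yields the lemma, with the $\lambda_1=1$ contribution on both sides cancelling against $\ip{f}{\one}^2$.
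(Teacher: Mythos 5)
Your proof is correct, and it takes a genuinely different route from the paper, even though both hinge on the same spectral decomposition. The paper partitions $\N$ into blocks of length $\trel$, uses $\lambda_j^{\trel k}\leq \e^{-k}$ to compare the $k$-th block with the $0$-th, crudely bounds the weight $\trel k + s + 1 \leq \trel(k+1)$, and then sums the geometric-like series $\sum_k (k+1)\e^{-k}=(1-1/\e)^{-2}$. You instead reduce to a per-eigenvalue scalar inequality and evaluate both sides of the claim in closed form ($\sum_{s\geq 0}(s+1)\lambda^s = (1-\lambda)^{-2}$, $\sum_{s=0}^{\trel-1}\lambda^s = (1-\lambda^{\trel})/(1-\lambda)$), leaving only the elementary bound $\trel(1-\lambda)(1-\lambda^{\trel})\geq 1-1/\e$, which you get by splitting into $\trel(1-\lambda)\geq 1$ (from $\trel\geq 1/(1-\lambda_2)$) and $1-\lambda^{\trel}\geq 1-1/\e$ (from $\lambda^{\trel}\leq(1-1/\trel)^{\trel}\leq 1/\e$). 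Your route is cleaner and in fact yields a slightly stronger result: you establish the inequality with denominator $1-1/\e$ rather than $(1-1/\e)^2$, since $1-1/\e\geq(1-1/\e)^2$. The paper's block method, while looser here, is the more robust template — it does not require the series to have nice closed forms and generalizes to situations where one only has the decay estimate $\ip{f}{P^{\trel k+s}f}-\ip{f}{\one}^2\leq \e^{-k}(\ip{f}{P^sf}-\ip{f}{\one}^2)$ without explicit eigenvalue control. One small stylistic note: you do not actually need the monotonicity of $k\mapsto(1-1/k)^k$; the one-sided bound $(1-1/k)^k\leq\e^{-1}$ follows directly from $\log(1-x)\leq -x$.
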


\begin{proof}[Proof of Lemma \ref{lem:sumfint2}]
Partitioning $\N$ in blocks of length $\trel$, we may write
$$
\sum_{s=0}^{+\infty}(s+1)\left(\ip{f}{P^{s}f} - \ip{f}{\one}^2\right) = \sum_{k=0}^{+\infty}\sum_{s=0}^{\trel-1}(\trel k +s+1)\left(\ip{f}{P^{\trel k+s}f} - \ip{f}{\one}^2\right)\, .
$$
The terms in the above sums can be written in the form:
\[\ip{f}{P^{r}f} - \ip{f}{\one}^2 = \sum_{j=2}^n\,\lambda_j^r\,\ip{f}{\Psi_j}^2\]
where $\lambda_1=1>\lambda_2\geq \lambda_2\geq \dots \leq \lambda_n\geq 0$ are the eigenvalues of $P$ and $(\Psi_1=\one,\Psi_2,\dots,\Psi_n)$ is an orthonormal basis of eigenvectors for the inner product $\ip{\cdot}{\cdot}$. By definition of $\trel$, we have $\lambda_j^{\trel k} \leq \e^{-k}$ for all $j\geq 2$. Therefore,
\[\ip{f}{P^{\trel k+s}f} - \ip{f}{\one}^2\leq \e^{-k}\,[\ip{f}{P^{s}f} - \ip{f}{\one}^2].\]
Summing the bounds, we obtain
\begin{eqnarray*}
\sum_{s=0}^{+\infty}(s+1)\left( \ip{f}{P^{s}f} - \ip{f}{\one}^2\right)&\leq &
\sum_{k=0}^{+\infty}  \sum_{s=0}^{\trel -1}(\trel k +s+1) \e^{-k}\left( \ip{f}{P^{s}f} - \ip{f}{\one}^2\right)\\
&\leq &
\sum_{k=0}^{+\infty} \trel (k +1) \e^{-k} \sum_{s=0}^{\trel -1}\left( \ip{f}{P^{s}f} - \ip{f}{\one}^2\right)\\
&\leq & \frac{\trel}{(1-1/\e)^2} \sum_{s=0}^{\trel -1} \,[\ip{f}{P^{s}f} - \ip{f}{\one}^2]\, .
\end{eqnarray*}
\end{proof}

\begin{proof}[Proof of Proposition \ref{prop:intersections}]
By~\eqref{eq:intersection-return}, we have
$$
\EE_x \cI_t =\sum_{i,j=0}^{t-1} \frac{P^{i+j}(x,x)}{\deg(x)}=\frac{t^2}{m}+\frac{1}{2m}\sum_{i,j=0}^{t-1} \left(\frac{P^{i+j}(x,x)}{\pi(x)}-1\right)\, .
$$
All summands in the right-hand side are non-negative (this can be seen, for instance, by the spectral decomposition $P^r(x,x)=\pi(x)+\sum_{j=2}^n \lambda_j^r\Psi_j(x)^2\pi(x)$ and by non-negativity of the eigenvalues). Moreover, by Lemma~\ref{lem:sumfint2} applied to the function $f=\frac{\ind_{\{\cdot=x\}}}{\pi(x)}$,
\begin{eqnarray}\label{eq:centered-return}
\sum_{i,j=0}^{t-1} \left(\frac{P^{i+j}(x,x)}{\pi(x)}-1\right)&\leq &\sum_{s=0}^{+\infty}(s+1)\left(\frac{P^{s}(x,x)}{\pi(x)}-1\right)\nonumber\\
&\leq & \frac{\trel}{(1-1/\e)^2}\sum_{s=0}^{\trel -1}\left(\frac{P^{s}(x,x)}{\pi(x)}-1\right)\\
&\leq & \frac{\trel}{(1-1/\e)^2} \frac{\G_\trel(x,x)}{\pi(x)}\, \cdot \nonumber
\end{eqnarray}
Resorting to Lemma~\ref{lem:bound-green}, we obtain
$$
\EE_x \cI_t \leq \frac{t^2}{2m} +\frac{6\trel^{3/2}}{(1-1/\e)^2 d} \leq \frac{t^2}{2m} + \frac{16\trel^{3/2}}{d}\, ,
$$
concluding the proof of the first moment bounds. Moving on to the second moment, we have
\begin{eqnarray*}
\EE_x \cI_t^2
&=& \sum_{u,v}\frac{1}{\deg(u)\deg(v)}\left(\sum_{i,k=0}^{t-1} \PP_x(X_i=u,X_k=v)\right)^2\\
&\leq & \sum_{u,v}\frac{1}{\deg(u)\deg(v)}\left(\G_t(x,u)\G_t(u,v)+\G_t(x,v)\G_t(v,u)\right)^2\\
&\leq & 4 \sum_{u,v}\frac{\G_t(x,u)^2\G_t(u,v)^2}{\deg(u)\deg(v)} \\
&=& 4\sum_{u} \frac{\G_t(x,u)^2}{\deg(u)}\EE_u\cI_t\, \leq \, 4 \left(\max_{u\in V} \EE_u \cI_t\right)\EE_x \cI_t\, ,
\end{eqnarray*}
and~\eqref{eq:prop1-It-var} follows from the upper-bound in~\eqref{eq:prop1-It-exp}.
\end{proof}

\begin{proof}[Proof of Proposition~\ref{prop:intersections-mix}]
The bounds on the expectation of $\cJ_t$ are straightforward. Indeed
$$
\EE_x\cJ_t = \sum_{y,z} P^\tunif(x,y) P^\tunif(x,z)\EE_{y,z}\cI_t\, ,
$$
so that, by definition of $\tunif$ and the fact that $\sum_{y,z} \pi(y)\pi(z)\EE_{y,z}\cI_t =t^2/2m$,
$$
\left(\frac{3}{4}\right)^2 \frac{t^2}{2m}\leq \EE_x\cJ_t \leq \left(\frac{5}{4}\right)^2 \frac{t^2}{2m}\, \cdot
$$
Moving on to~\eqref{eq:prop2-Jt-var}, again by definition of $\tunif$, we have
\begin{eqnarray*}
\EE_x\cJ_t^2 &\lesssim & \sum_{y,z}\pi(y)\pi(z)\EE_{y,z}\cI_t^2\\
&\lesssim & \sum_{y,z}\sum_{u,v}\frac{\pi(y)\pi(z)}{\deg(u)\deg(v)}\sum_{i,j,k,\ell}\PP_y(X_i=u,X_k=v)\PP_z(Y_j=u,Y_\ell=v)\\
&\lesssim & \sum_{u,v}\frac{1}{\deg(u)\deg(v)}\left(\sum_{y}\pi(y)\sum_{i,k}\PP_y(X_i=u,X_k=v)\right)^2\\
&\lesssim & \sum_{u,v}\frac{1}{\deg(u)\deg(v)}\left(\sum_{y}\pi(y)\G_t(y,u)\G_t(u,v)\right)^2\, .
\end{eqnarray*}
Using that $\sum_y\pi(y)\G_t(y,u)=\sum_y \pi(u)\G_t(u,y)=t\pi(u)$, we have
$$
\EE_x\cJ_t^2  \lesssim \frac{t^2}{m^2}\sum_{u,v}\frac{\deg(u)}{\deg(v)}\G_t(u,v)^2=\frac{t^2}{m^2}\sum_{i,j=0}^{t-1}\sum_{u} P^{i+j}(u,u)\, ,
$$
where the last equality comes from reversibility. Now, by inequality~\eqref{eq:centered-return},
\begin{eqnarray*}
\sum_{i,j=0}^{t-1}\sum_{u} P^{i+j}(u,u)&=& t^2 +\sum_{u}\pi(u)\sum_{i,j=0}^{t-1} \left(\frac{P^{i+j}(u,u)}{\pi(u)}-1\right)\\
&\leq & t^2 + \frac{\trel}{(1-1/\e)^2}\sum_{s=0}^{\trel -1}\left(\sum_{u}P^s(u,u)-1\right) \, .
\end{eqnarray*}
Finally, resorting to Lemma~\ref{lem:lyons-oveis}, we obtain
$$
\EE_x\cJ_t^2 \lesssim \frac{t^2}{m^2}\left(t^2+ n\trel^{5/3}\right)\, ,
$$
concluding the proof of Proposition~\ref{prop:intersections-mix}.
\end{proof}

\begin{remark}
Proposition~\ref{prop:intersections} entails bounds on $\EE_{x,y}\cI_t$. Indeed, for $x\neq y$, we may use the bound
$$
\left|\frac{P^{t}(x,y)}{\pi(y)} - 1\right|\leq \sqrt{\frac{P^{t}(x,x)}{\pi(x)} - 1}\,\sqrt{\frac{P^{t}(y,y)}{\pi(y)} - 1}\, ,
$$
which follows, for instance, from Cauchy-Schwarz Inequality in the spectral decomposition $P^t(x,y)=\pi(y)\left(1+\sum_{j=2}^n \lambda_j^t \Psi_j(x)\Psi_j(y)\right)$. This entails
$$
\left|\EE_{x,y}{\cI_t} - \frac{t^2}{2m}\right| \leq \sqrt{\EE_{x,x}{\cI_t} - \frac{t^2}{2m}}\,\sqrt{\EE_{y,y}{\cI_t} - \frac{t^2}{2m}}\, .
$$
Moreover, one may check easily that $\max_{x,y}\EE_{x,y}\cI_t^2\leq \max_{x}\cI_t^2$. From those bounds, one may derive the following new bound  on the first intersection time: for $t\gtrsim \trel^{3/4}\sqrt{m/d}$, by the second-moment method, $\PP_{x,y}(\cI_t>0)\geq 1/8$. Since this holds uniformly in $x$ and $y$, one may perform independent experiments to conclude that
$$
\max_{x,y}\EE_{x,y}\tau_{\textrm{I}} \lesssim \trel^{3/4}\sqrt{m/d}\, .
$$
\end{remark}

%%%%%%%%%%%%%%%%%%%%%%%%%%%%%%%%%%%%%%%%%%%%%%%%%%%%%%%%%%%%%%%%%%%%%%%%%%%%%%%%%%
%%%%%%%%%%%%%%%%%%%%%%%%%%%%%%%%%%%%%%%%%%%%%%%%%%%%%%%%%%%%%%%%%%%%%%%%%%%%%%%%%%
\section{Estimating the number of vertices on regular graphs}
\label{sec:estimating-reg}
%%%%%%%%%%%%%%%%%%%%%%%%%%%%%%%%%%%%%%%%%%%%%%%%%%%%%%%%%%%%%%%%%%%%%%%%%%%%%%%%%%
%%%%%%%%%%%%%%%%%%%%%%%%%%%%%%%%%%%%%%%%%%%%%%%%%%%%%%%%%%%%%%%%%%%%%%%%%%%%%%%%%%

\subsection{A simple estimator for the number of vertices}

Specifying to regular graphs with degree $d\geq 1$ and considering the unweighted number of intersections $\I_t$, Proposition~\ref{prop:intersections} entails
$$
\frac{t^2}{n} \leq \EE_x\I_t \leq \frac{t^2}{n} + 16\trel^{3/2}\, ,
$$
and
$$
\EE_x\I_t^2 \lesssim \left(\frac{t^2}{n} +\trel^{3/2}\right)^2\, .
$$
This suggests the following simple estimator for the number of vertices in a regular graph: consider $2K$ independent lazy random walks $X^{(1)}, Y^{(1)},\dots, X^{(K)}, Y^{(K)}$ all started at the same vertex $x\in V$. For each $k$ between $1$ and $K$, let $\I_t^{(k)}$ be the number of intersections of $X^{(k)}$ and $Y^{(k)}$ between $0$ and $t-1$, and define
\begin{eqnarray}
\label{eq:hat-n}
\widehat{n}_t&=&\frac{t^2}{\frac{1}{K}\sum_{k=1}^K\I_t^{(k)}}\, \cdot
\end{eqnarray}

For  $t\geq 2\sqrt{6}\trel^{3/4}\sqrt{n}$, we have $\frac{t^2}{n}\leq \EE_x\I_t \leq \frac{5t^2}{3n}$ and $\var_x \I_t\lesssim t^4/n^2$. Hence, by Chebyshev's Inequality
\[
\PP_x\left(\Big| \frac{\widehat{n}_t}{n} -1\Big| >\frac{1}{2}\right) \leq  \PP_x\left(\Big|\frac{1}{K}\sum_{k=1}^K\I_t^{(k)}-\EE_x \cI_t\Big| >\frac{t^2}{3n}\right)=O\left(\frac{1}{K}\right)\, .
\]

\subsection{Lower bounds for regular graphs}\label{subsec:lower-bound-reg}

The case of the cycle on $n$ vertices gives an example where the bound $\trel^{3/4}\sqrt{n}$ is tight. Indeed, in this case, $\trel\asymp n^2$, and thus $\trel^{3/4}\sqrt{n}\asymp n^2$. And any procedure based on random walks requires at least order $n^2$ steps to distinguish between a cycle of size $n$ and a cycle of size $2n$.

This section is devoted to a stronger version of tightness. Namely, we exhibit graphs achieving the bound $\trel^{3/4}\sqrt{n}$ for any $n$, \emph{and} for any relaxation time $\trel$.

\begin{proposition}
\label{prop:lower-bound}
There exist absolute constants $\delta,\Lambda>0$ such that the following holds. For all integers $n\geq \Lambda$ and $\bt(n)$ with $\Lambda\leq \bt(n)\leq \Lambda n^2$, for all map $\est:\cS\to \R$, there exists a $3$-regular graph $G=(V,E)$ such that
\begin{itemize}
\item $|V|\in \left[n, 14n\right]$;
\item $\trel  \leq \bt(n)$;
\item for more than $9/10^{\mbox{th}}$ of the vertices $x\in V$, for all $t,K\geq 1$ with $tK\leq \delta \bt(n)^{3/4} \sqrt{n}$,
$$
\PP_x\left(\left| \frac{\hat{n}_t}{n}-1\right|>\frac{1}{2}\right)\geq \frac{1}{4}\, ,
$$
where $\hat{n}_t=\est\left(\Phi\left(\bX_t^{(1)},\dots,\bX_t^{(K)}\right)\right)$.
\end{itemize}
\end{proposition}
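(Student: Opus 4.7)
The approach is a two-point Le~Cam argument. I would construct two $3$-regular graphs $G^{(1)}, G^{(2)}$ with relaxation time at most $\bt(n)$ and vertex counts differing by more than a factor of~$3$, say $|V^{(1)}|\asymp n$ and $|V^{(2)}|\asymp 4n$. For any estimator $\est$, a single numerical output cannot lie within factor $3/2$ of both true sizes, so if the profile distributions of $K$ walks of length~$t$ are at total-variation distance less than $1/2$ in the two graphs, $\est$ must err with probability at least $1/4$ on at least one of them. The graph $G$ of the statement is then obtained by gluing $G^{(1)}$ and $G^{(2)}$ into a connected $3$-regular graph, which yields the ``$9/10$ of vertices'' assertion because each of the two halves contributes a constant fraction of $|V|$.

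For the construction itself, I would take a ``boot attached to an expander'': a $3$-regular analogue of a path of length $L = \Theta(\sqrt{\bt(n)})$ (for instance a cycle of small clusters arranged in a line, with minor surgery to preserve $3$-regularity), attached at one end to a random $3$-regular expander of size $\asymp n$ or $\asymp 2n$. The boot acts as a bottleneck with relaxation time $\Theta(L^2) = \Theta(\bt(n))$. Critically, the boot structure is identical in $G^{(1)}$ and $G^{(2)}$, so for starting vertices~$x$ deep in the boot, the walk spends a time of order $\bt(n)$ there before reaching the expander, contributing an $n$-independent ``bias'' of order $\bt(n)^{3/2}$ to $\EE_x\cI_t$ (exactly saturating the upper bound of Proposition~\ref{prop:intersections}).

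The core estimate is a $\chi^2$-divergence bound on the profile distributions. On the boot-plus-expander graph, both $\EE_x\cI_t$ and $\sqrt{\var_x \cI_t}$ are of order $\bt(n)^{3/2}$ for $t \leq \bt(n)^{3/4}\sqrt n$, while the $n$-dependent difference $\EE_{G^{(1)}}\cI_t - \EE_{G^{(2)}}\cI_t$ is only of order $t^2/n$; the per-walk signal-to-noise ratio on any intersection statistic is therefore $O\bigl(t^2/(n\,\bt(n)^{3/2})\bigr)$. Translating this into a $\chi^2$-divergence on the single-walk profile and tensorising over the $K$ independent walks yields, heuristically,
\[
\chi^2\bigl(\text{profile}_{G^{(1)}}\,\big\|\,\text{profile}_{G^{(2)}}\bigr) \;\lesssim\; \frac{K\,t^4}{n^2\,\bt(n)^3} \;\leq\; \frac{(tK)^4}{n^2\,\bt(n)^3},
\]
where the second inequality uses $K\geq 1$. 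This is bounded by a small constant exactly when $tK \leq \delta\,\bt(n)^{3/4}\sqrt n$, and Pinsker's inequality then delivers the TV bound required for the Le~Cam conclusion.

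The principal difficulty is turning the intersection-count signal-to-noise calculation into a genuine $\chi^2$ (or TV) bound on the \emph{full} profile, since the profile records substantially more than just $\cI_t$, including all cross-walk rank comparisons. The expected route is a step-by-step coupling of walks in $G^{(1)}, G^{(2)}$ via their common universal cover $T_3$, where the ``bad event'' on which the coupling fails and the profiles first differ can be dominated by a weighted intersection count, which is in turn controlled via the second-moment bounds of Propositions~\ref{prop:intersections} and~\ref{prop:intersections-mix}. A secondary technical point is the $3$-regular surgery needed to construct the boot appendage and to glue $G^{(1)}, G^{(2)}$ into a single connected graph of the right size, together with verifying that the ``$9/10$ of vertices'' conclusion survives the gluing.
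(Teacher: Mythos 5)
The high-level strategy (two-point Le Cam with a coupling of profiles) is correct, but both of your key constructions are wrong, and the second one is a genuine conceptual error.

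First, the ``boot attached to an expander'' hard instance cannot work for the statement as written. You want the walk to be slow from \emph{more than $9/10$ of starting vertices}, but a path-like appendage of length $L=\Theta(\sqrt{\bt(n)})$ attached to an expander of size $\Theta(n)$ contains only $O(\sqrt{\bt(n)})=o(n)$ vertices (since $\bt(n)\lesssim n^2$). From the other $\geq 1-o(1)$ fraction of vertices, those in the expander, the walk mixes in $O(\log n)$ steps and intersection counting estimates $n$ in $O(\sqrt{n}\,\mathrm{polylog}\,n)$ total steps, which is far below $\bt(n)^{3/4}\sqrt{n}$ when $\bt(n)$ is at all large. Said differently, the $\bt(n)^{3/2}$ ``bias'' term in Proposition~\ref{prop:intersections} is an upper bound over the worst starting vertex; it is not attained at typical vertices of a graph with a single localized bottleneck, so the signal-to-noise calculation you sketch is valid only for a vanishing fraction of starting points. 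The slowdown has to be \emph{global}. The paper achieves this by taking a random $3$-regular expander on $k\asymp n/\ell$ vertices and replacing \emph{every} edge by a path of length $\ell=\Theta(\sqrt{\bt(n)})$, then restoring $3$-regularity by adding chords of length $2$ inside each path; every edge is now a bottleneck and the local structure is identical wherever the walk starts.

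Second, the gluing step is a misreading of what ``$9/10$ of vertices'' means and introduces needless difficulty. The proposition is an existence statement: there exists \emph{one} $3$-regular graph $G$ (from the family you build) such that the estimator fails from most of its vertices. Your Le Cam argument already gives you that whenever the profiles of $G^{(1)}$ and $G^{(2)}$ can be coupled with probability $\geq 3/4$, the estimator must fail with probability $\geq 1/4$ on $G^{(1)}$ from $x$ or on $G^{(2)}$ from $y$. You then take $G$ to be whichever of the two graphs has the majority of bad starting vertices; no gluing is needed, and gluing would in fact destroy the argument (the estimator would observe the glued graph, not a piece of it, and the vertex count would exceed $14n$). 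The fraction of good starting vertices comes from the coupling itself: in the configuration-model construction, the walk explores a tree until it closes a cycle, and the probability of closing a cycle in the first $s\leq\sqrt{k}/20$ steps is $O(s^2/k)$; averaging over the randomness of the graph and applying Markov gives the ``at least $9/10$ of vertices admit a successful coupling'' conclusion for both $\cE_k$ and $\cE_{4k}$ simultaneously. This is exactly the ``step-by-step coupling via the universal cover'' route you mention at the end, so your intuition there is right, but that is the \emph{main} argument, not a fallback: the $\chi^2$/Pinsker computation as sketched only tracks a single scalar statistic $\cI_t$ and would not control the full profile without essentially redoing the coupling.

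A further detail you will need: after replacing edges by paths, the slowdown factor is $\ell^2$, so the coupling budget $s\leq\sqrt{k}/20$ in $\cE_k$ translates to $Kt\lesssim\ell^2\sqrt{k}\asymp\bt(n)^{3/4}\sqrt{n}$ in the modified graph, which is where the exponent $3/4$ comes from; and the relaxation time bound $\trel(\cG_{k,\ell})\lesssim\ell^2$ follows from a conductance/Cheeger estimate (the expander's bottleneck ratio is $\Theta(1)$, so the modified graph's is $\Theta(1/\ell)$).
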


Before proving Proposition~\ref{prop:lower-bound}, we first establish the following lemma.

\begin{lemma}
\label{lem:lower-bound-random-graph}
For $k\geq 2$ even, let $G_k$ be a uniform random $3$-regular graph on $k$ vertices. Then
\begin{enumerate}
\item The probability that $G_k$ is connected tends to $1$ as $k\to \infty$;
\item The relaxation time $\trel(G_k)$ tends to $(1-2\sqrt{2}/3)^{-1}$ in probability;
\item For $k$ large enough, for all $x\in V(G_k)$, letting $(X_s)_{s\geq 0}$ be the concatenation of independent \rw s of length $t\geq 1$ on $G_k$ started at $x$ (i.e. $(X_s)_{s\geq 0}=(\bX_t^{(1)},\bX_t^{(2)},\dots)$), we have, as soon as $s\leq \sqrt{k}/20$,
$$
\PP_x\left(\bG_s \mbox{ is a tree } \right)\geq \frac{93}{95}\, ,
$$
where $\bG_s$ is the subgraph induced by the edges visited by $(X_0,\dots,X_{s-1})$.
\end{enumerate}

\end{lemma}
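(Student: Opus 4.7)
My plan is to handle the three parts separately, working throughout in the configuration model: $k$ vertices each carry three half-edges and the $3k$ half-edges are paired uniformly at random. Since the probability that the resulting multigraph is simple is bounded away from zero (tending to $\mathrm{e}^{-2}$ for $d=3$), any event of configuration-model probability $1-\eps$ has simple-graph probability $1-O(\eps)$, so it suffices to establish the three claims in the configuration model.

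Part (1) is classical (Bollob\'as, Wormald): a first-moment computation on pairings shows that the expected number of components on a bounded number of vertices is $o(1)$, and standard edge-cut arguments rule out a second component of linear size. Part (2) is Friedman's theorem (with simpler later proofs by Bordenave and Puder): for the adjacency matrix $A$ of a uniform random $3$-regular graph, $\lambda_2(A)\to 2\sqrt{2}$ in probability. Since the lazy transition matrix of $G_k$ is $P=\frac{1}{2}(I+A/3)$, one obtains $\lambda_2(P)\to\frac{1}{2}\bigl(1+2\sqrt{2}/3\bigr)$, whence $\trel$ converges to the corresponding limit.

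The main obstacle is part (3). I would use the principle of \emph{deferred decisions} in the configuration model: rather than generating the full pairing in advance, generate it online as the walker explores half-edges. At each step, when the walker at $v$ selects an incident half-edge $h$, either $h$ has already been paired (so the walker traverses an already-revealed edge and no edge is added to $\bG_s$), or $h$ is new and is paired with a uniformly random remaining unpaired half-edge. The subgraph $\bG_s$ fails to be a tree precisely when some such online match lands on a half-edge incident to a previously visited vertex, creating a cycle. This decouples the walk dynamics from the unrevealed part of the graph and reduces the question to a pure pairing estimate.

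After $s$ steps at most $s$ new edges have been revealed, so the walker has visited at most $s+1$ vertices, and the number of unpaired half-edges at previously visited vertices is at most $3(s+1)$. Union-bounding over the $r$-th new-edge reveal,
\[
\PP_x\bigl(\bG_s\text{ not a tree}\bigr)\leq \sum_{r=1}^{s}\frac{3(r+1)}{3k-2r+1}\leq \frac{6s^{2}}{k}
\]
for $s\leq k/10$ and $k$ large. With $s\leq \sqrt{k}/20$ this is bounded by $6/400<2/95$, as required. The restart of the walker at $x$ every $t$ steps is immaterial: the cycle-creation event depends only on the sequence of online pairings, not on when the walker returns to $x$, and jumps between excursions add no edges to $\bG_s$. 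The constant-factor loss in transferring back from the configuration model to the uniform simple graph is harmless for $k$ large.
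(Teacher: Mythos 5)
Your approach to all three parts matches the paper's, and in particular part (3) uses exactly the paper's idea: generate the pairing of half-edges on the fly (deferred decisions), observe that $\bG_s$ remains a tree until a freshly paired half-edge lands back on an already-visited vertex, and bound the probability of that event. However, your explicit constants do not close, and the ``harmless for $k$ large'' dismissal of the transfer to simple graphs is wrong in spirit: the target bound $93/95$ is a \emph{constant}, not something that improves with $k$, so the transfer factor must be tracked. Conditioning on simplicity multiplies the annealed failure probability by $1/\PP(\mathrm{simple})$, and for the $3$-regular configuration model $\PP(\mathrm{simple})\to \e^{-2}$ (the paper conservatively uses $\geq 1/8$ for $k$ large). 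Your annealed bound $6s^{2}/k=6/400$ at $s=\sqrt{k}/20$ then gives a conditional bound of $8\cdot 6/400=12/100$, which is about six times larger than the required $2/95\approx 0.021$, so the stated numbers do not prove the claim. The fix is simply to evaluate your own sum more carefully: $\sum_{r=1}^{s}\frac{3(r+1)}{3k-2r+1}\lesssim \frac{3s(s+1)/2}{3k-2s}\approx \frac{s^{2}}{2k}=\frac{1}{800}$, and then $8/800=1/100<2/95$. (The paper uses the slightly looser $\frac{3s^{2}}{3k-3s}\to 1/400$, chosen so that $8/400=2/95$ exactly, which is precisely where the constant $93/95$ in the statement comes from.)

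One small additional remark on part (2): your derivation $\lambda_2(P)\to\frac{1}{2}\left(1+2\sqrt{2}/3\right)$ for the lazy matrix $P=\frac12(I+A/3)$ is correct, but it yields $\trel\to 2\left(1-2\sqrt{2}/3\right)^{-1}$, which differs by a factor of $2$ from the value $(1-2\sqrt{2}/3)^{-1}$ asserted in the lemma (which corresponds to the non-lazy walk). This discrepancy lives in the lemma statement rather than your argument, and is irrelevant for the downstream use, where only the order of $\trel$ matters; you were right not to paper over it.
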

\begin{proof}[Proof of Lemma~\ref{lem:lower-bound-random-graph}]
The first item is a well-known fact, valid for random graphs with given degrees, as soon as the minimum degree is larger or equal to $3$. The second item is by Friedman's Theorem~\citep{friedman2008proof}, which states that a random $d$-regular graph is with high probability weakly Ramanujan, i.e. its relaxation time is asymptotic to $(1-2\sqrt{d-1}/d)^{-1}$. Now, to establish the third item, we use a common method to generate a uniform $3$-regular random graph, known as the \emph{configuration model} (see \citep{configuration1}). One initially considers $k$ isolated vertices, each vertex $v$ being endowed with $3$ \emph{half-edges} $(v,1)$, $(v,2)$, $(v,3)$. A random matching on half-edges is then chosen uniformly, and each pair of matched half-edges is interpreted as an edge between the corresponding vertices. The probability that this creates a  simple graph tends to $\e^{-2}$ (see for instance~\citep{janson2009probability}), and, conditionally on being simple, the graph is uniformly distributed over simple $3$-regular graphs. One nice feature of this model is that it allows to generate sequentially and simultaneously the graph and the random walks, as follows. Initially, all half-edges are unpaired and $X_0=x$. Then, at each step $s\geq 1$,
\begin{itemize}
\item either $s$ is a multiple of $t$ and we set $X_{s}=x$ (hereby starting a new walk),
\item or $s$ is not a multiple of $t$ and we then choose with probability $1/3$ a half-edge $(X_{s-1},*)$ attached to $X_{s-1}$. If $(X_{s-1},*)$ has already been paired to some half-edge $(v,*)$, we let $X_{s}=v$. Otherwise, we choose uniformly at random an unpaired half-edge $(u,*)$, match $(X_{s-1},*)$ and $(u,*)$, and let $X_{s}=u$.
\end{itemize}
Observe that the edges spanned by $(X_s)$ form a tree up to the first time $s$ when $(X_{s-1},*)$ is unpaired but is then matched to a half-edge attached to a visited vertex (creating a cycle in the induced graph). The probability that this event first occurs at time $s$ is smaller than $\frac{3s}{3k-3s}$ (by time $s-1$, we have exposed at most $3s$ half-edges). Hence, the (annealed) probability that this event occurs before time $s$ is smaller than $\frac{3s^2}{3k-3s}$. For $s=\sqrt{k}/20$, this probability is smaller than $1/380$. For $k$ large enough, the probability for the configuration model to yield a simple graph is larger than $1/8$, hence, on $G_k$, we have $\PP_x\left(\bG_s \mbox{ is a tree } \right)\geq 1- 8/380 =93/95$.
\end{proof}

Lemma~\ref{lem:lower-bound-random-graph} entails the following: there exists $k_0\geq 1$ such that for all even $k\geq k_0$, there exist connected $3$-regular graphs $\cE_k$ and $\cE_{4k}$ on $k$ and $4k$ vertices respectively, satisfying
\begin{equation}\label{eq:trel}
\max\{\trel(\cE_k), \trel(\cE_{4k})\}\leq 18\, ,
\end{equation}
and, for more than $9/10^{\mbox{th}}$ of the pairs of vertices $(x,y)\in V(\cE_k)\times V(\cE_{4k})$, there is a coupling of $(X_s)$ and $(Y_s)$, where $(X_s)$ (resp. $(Y_s)$) is the concatenation of independent \rw s of length $t$ on $\cE_k$ (resp. $\cE_{4k}$) started at $x$ (resp. $y$) such that, if $s\leq \sqrt{k}/20$,
\begin{equation}\label{eq:coupling}
\PP_{x,y}\left(\Phi(X_0^s)=\Phi(Y_0^s) \right)\geq  \frac{3}{4}\, \cdot
\end{equation}
Indeed, on uniform $3$-regular random graphs $G_k$ and $G_{4k}$, the two processes $(X_s)$ and $(Y_s)$ can be successfully coupled up to the first time $s$ when $\bG_s$ is not a tree. By Lemma~\ref{lem:lower-bound-random-graph}, this has probability less than $2/95$ for $s\leq \sqrt{k}/20$. Letting $\mathbf{P}_{x,y}$ denote the (quenched) probability associated with the coupled random walks on $G_k$ and $G_{4k}$, by Markov's Inequality (applied twice),
\begin{eqnarray*}
\PP\left(\left|\left\{ (x,y),\, \mathbf{P}_{x,y}\left(\Phi(X_0^s)=\Phi(Y_0^s) \right)<\frac{3}{4}\right\}\right|>\frac{1}{10}(k\times 4k)\right)&\leq & \frac{80}{95}\, \cdot
\end{eqnarray*}
Hence we can find graphs $\cE_k$ and $\cE_{4k}$ satisfying~\eqref{eq:coupling}.

\begin{proof}[Proof of Proposition~\ref{prop:lower-bound}]
For some constant $\Lambda>0$ to be specified later, let $n\geq \Lambda$ and $\Lambda \leq \bt(n)\leq \Lambda n^2$, and define
$$
\ell =4\left\lfloor \frac{1}{4} \sqrt{\frac{\bt(n)}{\Lambda}} \right\rfloor +1 \qquad \mbox{and} \qquad k=\left\lceil \frac{2n}{3\ell -1} \right\rceil \, .
$$
Now let $\cG_{k,\ell}$ and $\cG_{4k,\ell}$ be constructed as follows:
\begin{enumerate}
\item take two $3$-regular graphs $\cE_k$ and $\cE_{4k}$ satisfying~\eqref{eq:coupling} (by our assumptions on $n$ and $\bt(n)$, the constant $\Lambda$ can be chosen large enough so that $k\geq k_0$);
\item in each graph, in place of each edge, put a path of length $\ell$.;
\item to make those graphs $3$-regular, add edges between pairs of interior vertices at distance $2$ on the same path (this is possible because $\ell-1$ is a multiple of $4$).
\end{enumerate}
See Figure~\ref{fig:lower-bound}.
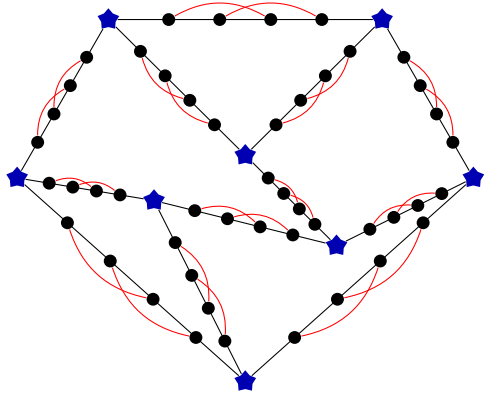
\begin{figure}[ht]
\centering
\begin{tikzpicture}[scale=0.6,sibling distance=8em,
  every node/.style = {fill,circle,minimum size=1.7mm,inner sep=0}]
\node[blue!70!black,star,minimum size=3mm] at (-2,1) (A) {};
\node[blue!70!black,star,minimum size=3mm] at (0,2) (B) {};
\node[blue!70!black,star,minimum size=3mm] at (2,0) (C) {};
\node[blue!70!black,star,minimum size=3mm] at (-5,1.5) (D) {};
\node[blue!70!black,star,minimum size=3mm] at (-3,5) (E) {};
\node[blue!70!black,star,minimum size=3mm] at (3,5) (F) {};
\node[blue!70!black,star,minimum size=3mm] at (5,1.5) (G) {};
\node[blue!70!black,star,minimum size=3mm] at (0,-3) (H) {};
\draw (A)--(C) node[pos=0.2] (ac1) {} node[pos=0.4] (ac2) {}  node[pos=0.6] (ac3) {} node[pos=0.8] (ac4) {};
\draw (A)--(H) node[pos=0.2] (ah1) {} node[pos=0.4] (ah2) {}  node[pos=0.6] (ah3) {} node[pos=0.8] (ah4) {};
\draw (B)--(C) node[pos=0.2] (bc1) {} node[pos=0.4] (bc2) {}  node[pos=0.6] (bc3) {} node[pos=0.8] (bc4) {};
\draw (A)--(D) node[pos=0.2] (ad1) {} node[pos=0.4] (ad2) {}  node[pos=0.6] (ad3) {} node[pos=0.8] (ad4) {};
\draw (D)--(H) node[pos=0.2] (dh1) {} node[pos=0.4] (dh2) {}  node[pos=0.6] (dh3) {} node[pos=0.8] (dh4) {};
\draw (H)--(G) node[pos=0.2] (hg1) {} node[pos=0.4] (hg2) {}  node[pos=0.6] (hg3) {} node[pos=0.8] (hg4) {};
\draw (C)--(G) node[pos=0.2] (cg1) {} node[pos=0.4] (cg2) {}  node[pos=0.6] (cg3) {} node[pos=0.8] (cg4) {};
\draw (F)--(G) node[pos=0.2] (fg1) {} node[pos=0.4] (fg2) {}  node[pos=0.6] (fg3) {} node[pos=0.8] (fg4) {};
\draw (D)--(E) node[pos=0.2] (de1) {} node[pos=0.4] (de2) {}  node[pos=0.6] (de3) {} node[pos=0.8] (de4) {};
\draw (E)--(F) node[pos=0.2] (ef1) {} node[pos=0.4] (ef2) {}  node[pos=0.6] (ef3) {} node[pos=0.8] (ef4) {};
\draw (B)--(E) node[pos=0.2] (be1) {} node[pos=0.4] (be2) {}  node[pos=0.6] (be3) {} node[pos=0.8] (be4) {};
\draw (B)--(F) node[pos=0.2] (bf1) {} node[pos=0.4] (bf2) {}  node[pos=0.6] (bf3) {} node[pos=0.8] (bf4) {};

\draw[red] (ac1) to[bend left] (ac3);
\draw[red] (ac2) to[bend left] (ac4);

\draw[red] (ah1) to[bend left] (ah3);
\draw[red] (ah2) to[bend left] (ah4);

\draw[red] (bc1) to[bend left] (bc3);
\draw[red] (bc2) to[bend left] (bc4);

\draw[red] (ad1) to[bend right] (ad3);
\draw[red] (ad2) to[bend right] (ad4);

\draw[red] (dh1) to[bend right] (dh3);
\draw[red] (dh2) to[bend right] (dh4);

\draw[red] (hg1) to[bend right] (hg3);
\draw[red] (hg2) to[bend right] (hg4);

\draw[red] (cg1) to[bend left] (cg3);
\draw[red] (cg2) to[bend left] (cg4);

\draw[red] (fg1) to[bend left] (fg3);
\draw[red] (fg2) to[bend left] (fg4);

\draw[red] (de1) to[bend left] (de3);
\draw[red] (de2) to[bend left] (de4);

\draw[red] (ef1) to[bend left] (ef3);
\draw[red] (ef2) to[bend left] (ef4);

\draw[red] (be1) to[bend left] (be3);
\draw[red] (be2) to[bend left] (be4);

\draw[red] (bf1) to[bend right] (bf3);
\draw[red] (bf2) to[bend right] (bf4);
\end{tikzpicture}
\captionsetup{margin=3cm}
\caption{The graph $\cG_{k,\ell}$ ($k=8$, $\ell=5$).
The blue star-shaped vertices are the original vertices of~$\cE_k$.}
\label{fig:lower-bound}
 \end{figure}

Note that, using $\ell\leq n+1$,
$$
n\leq |V(\cG_{k,\ell})|=\frac{k}{2}(3\ell-1)\leq \frac{7n}{2}\, ,
$$
and similarly $4n\leq |V(\cG_{4k,m})|\leq 14n$. Moreover, choosing $\Lambda$ large enough, we have
$$
\max\left\{\trel(\cG_{k,\ell}), \trel(\cG_{k,\ell})\right\}\leq \frac{\Lambda}{4} \ell^2 \, .
$$
This can be seen by conductance arguments (the bottleneck ratio of $\cE_k$ is bounded away from $0$ by expansion, entailing that the one of $\cG_{k,\ell}$ is up to constant factors larger than $1/\ell$, and by Cheeger's Inequality, the relaxtion time is smaller than $\ell^2$ up to constant factors). By definition of $\ell$ and the fact that $\Lambda\leq \bt(n)$,
$$
\max\left\{\trel(\cG_{k,\ell}), \trel(\cG_{4k,\ell})\right\}\leq  \frac{\Lambda}{4}\left(\sqrt{\frac{\bt(n)}{\Lambda}}+1\right)^2 \leq  \frac{\Lambda}{4}\left(2\sqrt{\frac{\bt(n)}{\Lambda}}\right)^2\leq \bt(n)\, .
$$
Combining equation~\eqref{eq:coupling} and the $\ell^2$-slow down induced by paths, we obtain that for $9/10$ of the starting points $(x,y)\in V(G_{k,\ell})\times V(G_{4k,\ell})$, there is a coupling of random walks such that, letting
$$
\bA_t=\left\{\Phi\left(\bX_t^{(1)},\dots,\bX_t^{(K)}\right)=\Phi\left(\bY_t^{(1)},\dots,\bY_t^{(K)}\right)\right\},
$$
we have
\begin{equation}
\label{eq:coupling2}
\PP_{x,y}\left(\bA_t\right) \geq  \frac{3}{4}\, ,\quad\mbox{with}\quad  Kt=\delta \ell^2\sqrt{k}\, ,
\end{equation}
for some $\delta>0$ small enough. Let $\est:\cS\to \N$ be an estimator and let $\hat{n}_t(X)= \est\left(\Phi\left(\bX_t^{(1)},\dots,\bX_t^{(K)}\right)\right)$ and $\hat{n}_t(Y)= \est\left(\Phi\left(\bY_t^{(1)},\dots,\bY_t^{(K)}\right)\right)$. Define
\[
B_t^{X}=\left\{\frac{1}{2}\leq \frac{\hat{n}_t(X)}{n}\leq \frac{3}{2}\right\}\, ,\quad\mbox{and }\quad
B_t^{Y}=\left\{\frac{1}{2}\leq \frac{\hat{n}_t(Y)}{4n}\leq \frac{3}{2}\right\}\, .
\]
Assume that it holds simultaneously that $\PP_{x}\left(B_t^{X}\right)\geq 3/4$ and $\PP_{y}\left(B_t^{Y}\right)\geq 3/4$. Then, by~\eqref{eq:coupling2},
\begin{displaymath}
\PP_{x,y}\left(B_t^{X}\given \bA_t\right)=\frac{\PP_{x,y}\left(B_t^{X}\cap \bA_t\right)}{\PP_{x,y}(\bA_t)}\geq 1-\frac{1-\PP_{x}\left(B_t^{X}\right)}{\PP_{x,y}(\bA_t)}\geq \frac{2}{3}\, ,
\end{displaymath}
and similarly, $\PP_{x,y}\left(B_t^{Y}\given \bA_t\right) \geq \frac{2}{3}$, so that $\PP_{x,y}\left(B_t^{X}\cap B_t^{Y}\given \bA_t\right) \geq  \frac{1}{3}$. However, on the event $\bA_t$, the events $B_t^{X}$ and $B_t^{Y}$ can not occur simultaneously, implying a contradiction. We either have $\PP_{x}\left(\big| \frac{\hat{n}_t(X)}{n}-1\big|>\frac{1}{2}\right)\geq \frac{1}{4}$ or $\PP_{y}\left(\big| \frac{\hat{n}_t(Y)}{4n}-1\big|>\frac{1}{2}\right)\geq \frac{1}{4}$. The proof is then concluded by noticing that
\begin{eqnarray*}
\ell^2\sqrt{k}&\gtrsim & \bt(n)^{3/4}\sqrt{n}\, .
\end{eqnarray*}
\end{proof}

%%%%%%%%%%%%%%%%%%%%%%%%%%%%%%%%%%%%%%%%%%%%%%%%%%%%%%%%%%%%%%%%%%%%%%%%%%%%%%%%%%
%%%%%%%%%%%%%%%%%%%%%%%%%%%%%%%%%%%%%%%%%%%%%%%%%%%%%%%%%%%%%%%%%%%%%%%%%%%%%%%%%%
\section{Computing parameters of general graphs}
\label{sec:estimating-non-reg}
%%%%%%%%%%%%%%%%%%%%%%%%%%%%%%%%%%%%%%%%%%%%%%%%%%%%%%%%%%%%%%%%%%%%%%%%%%%%%%%%%%
%%%%%%%%%%%%%%%%%%%%%%%%%%%%%%%%%%%%%%%%%%%%%%%%%%%%%%%%%%%%%%%%%%%%%%%%%%%%%%%%%%

\subsection{A simple estimator for the number of edges}
\label{subsec:estimating-edges-general}

In the non-regular case, Proposition~\ref{prop:intersections} suggests the following simple estimator for the number of edges, namely:
\begin{eqnarray}
\label{eq:hat-m}
\widehat{m}_t&=&\frac{t^2}{\frac{2}{K}\sum_{k=1}^K\cI_t^{(k)}}\,  ,
\end{eqnarray}
where $\left\{\cI_t^{(k)}\right\}_{k=1}^K$ are independent copies of $\cI_t$, the weighted number of intersections between to independent random walks started at some $x\in V$. For $t\geq 4\sqrt{3}\trel^{3/4}\sqrt{m/d}$, we have $\frac{t^2}{2m}\leq \EE_x\cI_t \leq \frac{5t^2}{6m}$ and $\var_x \cI_t\lesssim t^4/m^2$. Hence, by Chebyshev's Inequality
\[
\PP_x\left(\Big| \frac{\widehat{m}_t}{m} -1\Big| >\frac{1}{2}\right)= \PP_x\left(\Big| \frac{1}{K}\sum_{k=1}^K \cI_t^{(k)} -\EE_x\cI_t \Big| >\frac{t^2}{6m}\right) = O\left(\frac{1}{K}\right)\, .
\]

Alternatively, considering the other estimator
\begin{eqnarray}
\label{eq:hat-m2}
\widetilde{m}_t&=&\frac{t^2}{\frac{2}{K}\sum_{k=1}^K\cJ_t^{(k)}}\,  ,
\end{eqnarray}
where $\left\{\cJ_t^{(k)}\right\}_{k=1}^K$ are independent copies of $\cJ_t$, we obtain, by Proposition~\ref{prop:intersections-mix}, that for $t\gtrsim \trel^{5/6}\sqrt{n}$,
\[
\PP_x\left(\Big| \frac{\widetilde{m}_t}{m} -1\Big| >\frac{1}{2}\right) = O\left(\frac{1}{K}\right)\, .
\]
Since intersections are counted from the uniform mixing time, the total time complexity of $\widetilde{m}_t$ to reach error probability $\varepsilon$ is $O\left(\varepsilon^{-1}(\tunif +\trel^{5/6}\sqrt{n})\right)$.

\subsection{Lower bounds for general graphs}
\label{subsec:lower-bound-edges-general}

The bound $\trel^{5/6}\sqrt{n}$ is achieved on a graph known as the barbell, formed by two cliques of size $n$ joined by a path of length $n$. Indeed, the relaxation time of this graph has order $n^3$, so that $\trel^{5/6}\sqrt{n} \asymp n^3$, and any procedure based on random walks needs time $n^3$ to correctly estimate $n$, since this is the time needed by a random walk to go from one clique to the other.

As in Section~\ref{subsec:lower-bound-reg}, we now exhibit graphs achieving the bound $\trel^{5/6}\sqrt{n}$ for any $n$ and any relaxation time $\trel$. For two integers $k,q\geq 1$, consider the graph constructed as follows:
\begin{enumerate}
\item Take a $3$-regular graph $\cE_k$ on $k$ vertices, satisfying the properties of Lemma~\ref{lem:lower-bound-random-graph};
\item Replace each node of $\cE_k$ by a clique of size $q$;
\item Replace each edge of $\cE_k$ by a path of length $q$.
\end{enumerate}
See Figure~\ref{fig:lower-bound-non-reg}.
\begin{figure}[ht]
\centering
\begin{tikzpicture}[scale=0.5,sibling distance=8em,
  every node/.style = {circle,minimum size=0.8mm,inner sep=0}]
  \tikzstyle{big node}=[circle,thick,draw=black,minimum size=6mm]
\node[big node] at (-2,1) (A) {$K_q$};
\node[big node] at (0,2) (B) {$K_q$};
\node[big node] at (2,0) (C) {$K_q$};
\node[big node] at (-5,1.5) (D) {$K_q$};
\node[big node] at (-3,5) (E) {$K_q$};
\node[big node] at (3,5) (F) {$K_q$};
\node[big node] at (5,1.5) (G) {$K_q$};
\node[big node] at (1,-3) (H) {$K_q$};
\draw (A)--(C) node[pos=0.33,fill] (ac1) {} node[pos=0.66,fill] (ac2) {};
\draw (A)--(H) node[pos=0.33,fill] (ah1) {} node[pos=0.66,fill] (ah2) {};
\draw (B)--(C) node[pos=0.33,fill] (bc1) {} node[pos=0.66,fill] (bc2) {};
\draw (A)--(D) node[pos=0.33,fill] (ad1) {} node[pos=0.66,fill] (ad2) {};
\draw (D)--(H) node[pos=0.33,fill] (dh1) {} node[pos=0.66,fill] (dh2) {};
\draw (H)--(G) node[pos=0.33,fill] (hg1) {} node[pos=0.66,fill] (hg2) {};
\draw (C)--(G) node[pos=0.33,fill] (cg1) {} node[pos=0.66,fill] (cg2) {};
\draw (F)--(G) node[pos=0.33,fill] (fg1) {} node[pos=0.66,fill] (fg2) {};
\draw (D)--(E)  node[pos=0.33,fill] (de1) {} node[pos=0.66,fill] (de2) {};
\draw (E)--(F) node[pos=0.33,fill] (ef1) {} node[pos=0.66,fill] (ef2) {};
\draw (B)--(E) node[pos=0.33,fill] (be1) {} node[pos=0.66,fill] (be2) {};
\draw (B)--(F) node[pos=0.33,fill] (bf1) {} node[pos=0.66,fill] (bf2) {};
\end{tikzpicture}
\caption{ }
\label{fig:lower-bound-non-reg}
\end{figure}
Such a graph has a number of vertices $n$ of order $kq$ and relaxation time $\trel$ of order $q^3$. Parameters $k$ and $q$ may then be tuned so as to obtained (almost) any possible $n$ and $\trel$. Now, to estimate correctly the number of edges, one needs to get the correct order for $k$. By Lemma~\ref{lem:lower-bound-random-graph}, a random walk on $\cE_k$ needs order $\sqrt{k}$ steps to make a cycle and thus be able to distinguish $\cE_k$ from an infinite $3$-regular tree. Since adding cliques and paths of size $q$ slows down the random walk by a factor of $q^3$ (the time to go from one clique to an other in the modified graph), the estimation of the number of edges on such a graph requires at least order $q^3\sqrt{k}\asymp \trel^{5/6}\sqrt{n}$ steps.

\subsection{Estimating the number of vertices on general graphs}
\label{subsec:vertices-general}

We first note that estimating the number of vertices might take much more time than estimating the number of edges. More precisely, we show that order $\trel^{5/6}\sqrt{n}$ steps may not be enough to estimate $n$. Indeed, consider the graph formed by a clique of size $\ell$ with path of length $q$ attached to each vertex of the clique, with $q\ll \ell$ (see Figure~\ref{fig:estim-vertices}).
\begin{figure}[ht]
\begin{tikzpicture}[scale=0.5,sibling distance=8em,
  every node/.style = {circle,minimum size=0.7mm,inner sep=0}]

\def \n {7}
\def \radius {2.5cm}
\node[draw, circle, minimum size=10mm] at (0,0) (A) {$K_\ell$};
\foreach \s in {1,...,\n}
{
  \node[draw, circle,fill] at ({360/\n * (\s - 1)}:\radius) {};
  \draw (A)-- ({360/\n * (\s - 1)}:\radius) node[pos=0.33,fill]  {} node[pos=0.66,fill]  {};

}
\end{tikzpicture}
\caption{ }
\label{fig:estim-vertices}
\end{figure}
The number $n$ of vertices is of order $\ell q$, and, as $q\ll \ell$, the number $m$ of edges if of order $\ell^2$. Moreover, the relaxation time is of order $q^2$ (this can be seen by a coupling argument). Estimating $m$ is relatively easy: starting from the end of one path, the walk has to traverse it to reach the clique, which takes time $q^2$, and then to wait for a collision in the clique, which, by the birthday paradox, takes time $\sqrt{\ell}$. Estimating $n$ however takes more time: starting from the clique, the walk has to visit a positive fraction of at least one of the paths, and this takes time $\ell q$. As soon as $q\ll \ell^{3/7}$, we have $\ell q\gg \sqrt{\ell}q^{13/6} \asymp \trel^{5/6}\sqrt{n}$.

\medskip

Estimating $n$ might thus require more time. However, once a good estimate for the number of edges is known, it is quite easy to deduce an estimate for the number of vertices. Indeed, what remains to estimate is just the mean degree. Consider the function $f:x\in V\mapsto f(x)=\frac{1}{\deg(x)}$, and note that $\EE_\pi f=\frac{n}{2m}$. Applying \citep[Proposition 12.19]{levin2009markov} to the function $f$, we know that for $r\geq \tmix(\varepsilon/2)$ and $t\geq \frac{16\var_\pi f}{\varepsilon(\EE_\pi f)^2}\trel$, for all $x\in V$,
\begin{eqnarray*}
\PP_x\left(\left| \frac{1}{t}\sum_{s=0}^{t-1}f(X_{r+s})-\EE_\pi f\right|>\frac{\EE_\pi f}{2}\right)&\leq & \varepsilon\, .
\end{eqnarray*}
Observing that $\var_\pi f\leq \EE_\pi f^2=(2m)^{-1}\sum_{u}\deg(u)^{-1}$ and that $\tmix(\varepsilon/2)\lesssim \log(1/\varepsilon)\tunif$, the mean degree can be estimated with error probability less than $\varepsilon$ in time of order
$$
\log(1/\varepsilon)\tunif + \frac{\trel m}{\varepsilon n^2}\sum_{u\in V}\deg(u)^{-1}\lesssim \varepsilon^{-1}\tunif \frac{m}{n}\, \cdot
$$
Note that this is optimal by the previous example of Figure~\ref{fig:estim-vertices}, for which $\ell q \asymp \tunif m/n$. Altogether, the number of vertices of a connected graph can be estimated by random walks in time
$$
\varepsilon^{-1}\left(\left(\trel^{3/4}\sqrt{m/d}\right)\wedge\left(\trel^{5/6}\sqrt{n}\right) +\tunif \frac{m}{n}\right)\, .
$$

\section{No self-stopping algorithms in general}\label{sec:no-self-stop}

In this section, we show that one can not hope for a general sublinear self-stopping algorithm, even when restricting to graphs with  polylog mixing time.

Let $\cC$ be the class of graphs $G$ such that $\tunif(G)\leq (\log n_G)^3$.

Consider the following process on a graph, called \emph{random walk with restarts}: at each time step $t\geq 0$, based on $\Phi(X_0,\dots,X_t)$, the process decides whether it wants to make a random walk step from $X_t$, or to reset back to the starting point $x$. A self-stopping algorithm is based on the profile of a random walk with restarts, up to some stopping time $\tau$. More precisely, it relies on a function $\stopp:\cS\to \{0,1\}$. Defining
$$
\tau=\inf\left\{t\geq 0,\, \stopp\left(\Phi(X_0^t)\right)=1\right\}\, ,
$$
where $X_0^t=(X_0,\dots,X_t)$ is the trajectory of a random walk with restarts up to time $t$, then the self-stopping algorithm defined by \stopp\ and \est\ returns the value $\est\left(\Phi(X_0^\tau)\right)$.

\begin{proposition}
\label{prop:no-self-stop}
There exists $\delta>0$, such that, for all functions $\stopp$ and $\est$, there is an infinite sequence of graphs $G\in\cC$ and $x\in V(G)$ such that
\begin{eqnarray*}
\PP^G_x\left(\{\tau\geq \delta n_G\}\cup\left\{\Big|\frac{\est\left(\Phi(X_0^\tau)\right)}{n_G}-1\Big| >\frac{1}{2}\right\}\right)&\geq &\frac{1}{4}\, ,
\end{eqnarray*}
where $X$ is a \rw\ with restarts and $\tau=\inf\{t\geq 0,\, \stopp\left(\Phi(X_0^\tau)\right)=1\}$.
\end{proposition}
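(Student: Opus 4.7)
The plan is a coupling argument. For any pair $(\stopp,\est)$ defining the self-stopping scheme, I would construct two graphs $G,G'\in\cC$ with $n_{G'}>3\,n_G$ together with a coupling of random walks with restarts on them under which the profiles produced on $G$ and $G'$ coincide with probability strictly greater than $1/2$ for the first $4\delta n_G$ steps, for some absolute constant $\delta>0$. Since the factor-$2$ tolerance intervals $[n_G/2,3n_G/2]$ and $[n_{G'}/2,3n_{G'}/2]$ are then disjoint, a short contradiction argument will yield the conclusion.

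For the construction, I would take $G$ to be the clique $K_\ell$ with a pendant path of length $q$ attached at each clique vertex, and $G'$ the same but with pendant paths of length $4q$. Fix $\ell$ large and $q=\lfloor\tfrac{1}{10}\log\ell\rfloor$. Standard conductance bounds applied to the pendant paths give $\trel(G)=\Theta(q^2)=\trel(G')$, hence $\tunif=O(q^2\log n)=O((\log n)^3)$, so both graphs lie in $\cC$. Moreover $n_G=\ell(1+q)$ and $n_{G'}=\ell(1+4q)$, so $n_{G'}/n_G>3$ for $q$ large enough. The starting vertex $x$ is any clique vertex, identified canonically between $G$ and $G'$.

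The coupling runs the two walks in parallel with shared randomness: the restart decision at each step is made by $\stopp$ applied to the (common) profile, and at each non-restart step the walks take the same uniform step in terms of the canonical labels maintained through the profile. As long as the current vertex lies in the clique or on the first $q-1$ interior vertices of a pendant path, the local adjacency and degree labels agree between $G$ and $G'$, so this coupling keeps the profiles identical. The coupling breaks exactly at the first time the walk visits the $q$-th vertex of some path, since in $G$ this vertex is a leaf of degree $1$ whereas in $G'$ it is interior of degree $2$, and this mismatch is immediately recorded in the degree sequence of the profile.

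The main estimate is to show that this breakdown does not occur by time $t=4\delta n_G$ with probability larger than $1/2$. Regardless of the (profile-measurable) restart rule, each walk step performed at a clique vertex initiates a pendant-path excursion with probability at most $1/(2\ell)$; hence the total number of pendant-path excursions by time $t$ is stochastically dominated by $\mathrm{Bin}(t,1/(2\ell))$, with mean $O(t/\ell)$. In each excursion the walk reaches the $q$-th path vertex with probability at most $1/q$ by the gambler's-ruin formula for a lazy random walk on $\{0,1,\dots,q\}$ (and a restart can only curtail an ongoing excursion, lowering this probability). Thus the expected number of coupling-breaking visits by time $t$ is $O(t/(\ell q))=O(t/n_G)$, which is $O(\delta)$ for $t=4\delta n_G$; Markov gives a coupling-failure probability below $1/2$ when $\delta$ is small enough. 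The main obstacle is justifying these excursion statistics in the presence of adaptive restarts; the key point is that the restart rule is profile-measurable, acts simultaneously on the two coupled walks, and only truncates excursions, so the upper bounds derived for non-restarting walks still apply. With this estimate, if the conclusion of the proposition failed on both $G$ and $G'$, the joint event ``coupling holds throughout $[0,4\delta n_G]$ $\cap$ algorithm succeeds on $G$ $\cap$ algorithm succeeds on $G'$'' would still have positive probability; on it, $\stopp$ would stop at the same time on both graphs and $\est$ would return a single number forced to lie in the two disjoint tolerance intervals, a contradiction. Letting $\ell\to\infty$ then yields the required infinite sequence.
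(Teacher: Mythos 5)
Your construction is correct and the coupling argument is sound, but it is a genuinely different construction from the paper's, and it proves a slightly weaker statement than the paper's discussion promises.

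The paper builds $G$ a $3$-regular expander on $n$ vertices and $G^\star$ from $2^n$ disjoint copies of $G$, linked through three random ``portal'' vertices per copy along a second expander $F$. It then couples the two walks so that, with probability at least $2/3$, the walk on $G^\star$ does not hit $\{u_1,v_1,w_1\}$ in $\delta n$ steps, hence the two profiles agree. The ratio of sizes is $2^n$, and both graphs stay $3$-regular — this last point matters because the text around Proposition~\ref{prop:no-self-stop} advertises that the impossibility holds ``even if our graph is guaranteed to be $3$-regular.'' Your clique-plus-pendant-paths construction is not regular, so it establishes the proposition as literally stated (membership in $\cC$), but not the stronger $3$-regular variant; if that stronger form is wanted, one has to keep something in the spirit of the paper's expander-with-portals graph. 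On the other hand, your argument has the advantage that the key coupling estimate — expected number of depth-$q$ visits bounded by $t/(2\ell q)=O(\delta)$, with the observation that adaptive restarts only truncate excursions and that each excursion start/termination is controlled conditionally on the past — is spelled out more concretely than the paper's ``it is not hard to check'' step for avoiding $\{u_1,v_1,w_1\}$.

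Two small points you should tighten. First, the choice $q=\lfloor\frac{1}{10}\log\ell\rfloor$ only puts the graph in $\cC=\{G:\tunif\le(\log n_G)^3\}$ once the Cheeger and $\ell^\infty$-mixing constants are absorbed; you should take $q=\lfloor c\log\ell\rfloor$ for a sufficiently small absolute $c$ so that the resulting bound $\tunif\le C\,q^2\log n$ actually lands below $(\log n)^3$, not just $O((\log n)^3)$. Second, the horizon $4\delta n_G$ is harmless but unnecessary: on the joint event the stopping time is forced to be the minimum of the two, which is at most $\delta n_G$; coupling up to $\delta n_G$ already suffices, as long as you note that $\tau^Y=\tau^X<\delta n_G<\delta n_{G'}$ so the ``success'' on $G'$ forces the common estimate into the (disjoint) interval $[n_{G'}/2,3n_{G'}/2]$. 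With these adjustments the proof is complete.
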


\begin{proof}[Proof of Proposition~\ref{prop:no-self-stop}]
Consider a $3$-regular expander $G$ on $n$ vertices and a graph $G^\star$ obtained from $G$ as follows: let $G^{(1)},\dots,G^{(2^n)}$ be $2^n$ identical copies of $G$. For all $i\in\{1,\dots,2^n\}$, choose three distinct vertices $(u_i,v_i,w_i)$ uniformly at random in $V(G^{(i)})$. Now let $F$ be some other $3$-regular expander on $2^n$ vertices, labelled from $1$ to $2^n$. For all $1\leq i\leq 2^n$, if $i$ has neighbors $j<k<\ell$ in $F$, put an edge between $u_i$ and $u_j$, between $v_i$ and $v_k$, between $w_i$ and $w_\ell$. Let $G^\star$ be the resulting graph (on $|V(G^\star)|=n2^n$ vertices). Note that, as $F$ is an expander, and as the random walk on $G^\star$ needs order $n$ steps to go from some $u_i$ to either $v_i$ or $w_i$, we have $\tunif(G^\star)\lesssim n\log (2^n)$, so that both $G$ and $G^\star$ belong to the class $\cC$. It is not hard to check that one can find $y\in V(G^{(1)})$ and $\delta>0$, such that
\[
\PP^{G^\star}_y\left(\bigcap_{s=0}^{\delta n}\left\{Y_s\not\in\{u_1,v_1,w_1\}\right\}\right)\geq \frac{2}{3}\, \cdot
\]
Therefore, there exist starting points $(x,y)\in V(G)\times V(G^\star)$, and a coupling $(X,Y)$ of random walks with restarts at $x$ and $y$ (for the same restarting rule) such that
\begin{equation}
\label{eq:coupling-GH}
\PP_{x,y}\left(\bA_t\right) \geq  \frac{2}{3}\, ,\quad\mbox{with}\quad \bA_t=\left\{\Phi(X_0^t)=\Phi(Y_0^t)\right\}\quad\mbox{and}\quad t=\delta n\, .
\end{equation}
Let $\est:\cS\to \N$ be an estimator and $\stopp:\cS\to\{0,1\}$. For $(Z,H)\in\left\{ (X,G), (Y,G^\star)\right\}$, define
\[
B_H^{Z}=\left\{\tau^Z<\delta \left|V(H)\right|\right\}\cap\left\{\left| \frac{\est\left(\Phi(Z_0^{\tau^Z})\right)}{|V(H)|}-1\right|\leq \frac{1}{2}\right\}\, ,
\]
where $\tau^Z=\inf\left\{s\geq 0,\, \stopp\left(\Phi(Z_0^s)\right)=1\right\}$. Assume that we both have $\PP_{x}\left(B_G^{X}\right)\geq 3/4$ and $\PP_{y}\left(B_{G^\star}^{Y}\right)\geq 3/4$. Then, by~\eqref{eq:coupling-GH},
\begin{displaymath}
\PP_{x,y}\left(B_G^{X}\given \bA_t\right)=\frac{\PP_{x,y}\left(B_G^{X}\cap \bA_t\right)}{\PP_{x,y}(\bA_t)}\geq 1-\frac{1-\PP_{x}\left(B_G^{X}\right)}{\PP_{x,y}(\bA_t)}\geq \frac{5}{8}\, ,
\end{displaymath}
and similarly, $\PP_{x,y}\left(B_{G^\star}^{Y}\given \bA_t\right) \geq \frac{5}{8}$, so that $\PP_{x,y}\left(B_G^{X}\cap B_{G^\star}^{Y}\given \bA_t\right) \geq  \frac{1}{4}$. However, on the event $\bA_t$, we have $\{\tau^X<\delta |V(G)|\}\cap\{\tau^Y<\delta |V(G^\star)|\}=\{\tau^X<\delta n\}\cap\{\tau^Y=\tau^X\}$, so that $\est\left(\Phi(X_0^{\tau^X})\right)=\est\left(\Phi(Y_0^{\tau^Y})\right)$ and the events $B_G^{X}$ and $B_{G^\star}^{Y}$ can not occur simultaneously, implying a contradiction.
\end{proof}

%%%%%%%%%%%%%%%%%%%%%%%%%%%%%%%%%%%%%%%%%%%%%%%%%%%%%%%%%%%%%%%%%%%%%%%%%%%ù
%%%%%%%%%%%%%%%%%%%%%%%%%%%%%%%%%%%%%%%%%%%%%%%%%%%%%%%%%%%%%%%%%%%%%%%%%%%
\section{A self-stopping algorithm for the number of edges}\label{sec:algo-edges}
%%%%%%%%%%%%%%%%%%%%%%%%%%%%%%%%%%%%%%%%%%%%%%%%%%%%%%%%%%%%%%%%%%%%%%%%%%%%%%%%
%%%%%%%%%%%%%%%%%%%%%%%%%%%%%%%%%%%%%%%%%%%%%%%%%%%%%%%%%%%%%%%%%%%%%%%%%%%%%%%%%%

Let $G=(V,E)$ be a finite connected graph and let $\tau$ be an upper-bound on the relaxation time $\trel$.

\begin{algo}
\label{algo:edges}
For $q=0,1,\dots$, iterate the following procedure until stopped:
\begin{itemize}
\item let $\hat{m}=2^q$ be the current guess for the number of edges and let $t=t_{q}= \tau^{3/4}\sqrt{2\hat{m}}$.
\item let $R=R_q=\lceil 8\log(4/\varepsilon) + 16\log (q+1) \rceil$ and repeat the following experiment $R$ times.
\begin{itemize}
\item let $X^{(1)},Y^{(1)},\dots,X^{(K)},Y^{(K)}$ be $2K$ independent \lrw\ started from $x$ (for a fixed integer $K\geq 1$ to be specified later) and define
\[
\cQ_t = \frac{1}{K}\sum_{\ell=1}^K \cI_t^{(\ell)}\, ,\quad\mbox{where }\quad
\cI_t^{(\ell)} =  \sum_{i,j=0}^{t-1} \frac{1}{\deg(X_i^{(\ell)})}\ind_{\left\{X_i^{(\ell)}=Y_j^{(\ell)}\right\}}\, .
\]
\item If $\cQ_t\geq 18\tau^{3/2}$, call the experiment a success.
\end{itemize}
\item If the number of successes is larger than $R/2$, then stop and estimate $m$ by $\hat{m}=2^q$; otherwise, go from $q$ to $q+1$.
\end{itemize}
\end{algo}

\begin{proposition}
\label{prop:algo-edges}
Algorihtm~\ref{algo:edges} satisfies the two following properties:
\begin{enumerate}
\item The probability that the algorithm stops at a value of $q$ such that $2^q <m$ or $2^q>38m$ is smaller than $\varepsilon$.
\item The expected running time of the algorithm is $O\left(\sqrt{m}\tau^{3/4}\log\log m\right)$.
\end{enumerate}
\end{proposition}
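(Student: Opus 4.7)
The plan is to analyze the algorithm in three pieces: single-experiment concentration, amplification via the $R_q$ repetitions, and summation of per-step costs.

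First, applying Proposition~\ref{prop:intersections} to one experiment at level $q$, with $t = t_q = \tau^{3/4}\sqrt{2\hat m}$ and $\hat m = 2^q$, one has $t^2/(2m) = \hat m\tau^{3/2}/m$, so
\[
\frac{\hat m\,\tau^{3/2}}{m} \ \leq\ \EE_x \cI_t \ \leq\ \frac{\hat m\,\tau^{3/2}}{m} + 16\tau^{3/2},
\qquad \var(\cQ_t) \ \leq\ \frac{4}{K}\bigl(\max_a \EE_a\cI_t\bigr)\EE_x\cI_t.
\]
I will fix $K$ to be a large enough absolute constant so that Chebyshev's inequality yields the following two-sided control: if $\hat m < m$, then $\EE_x\cI_t \leq 17\tau^{3/2}$, the threshold $18\tau^{3/2}$ is at distance $\geq \tau^{3/2}$ above the mean, and $\PP_x(\cQ_t\geq 18\tau^{3/2})\leq 1/4$; while if $\hat m \geq 32m$ (the relevant overestimate regime, since consecutive powers of two straddle $38m$), then $\EE_x\cI_t\geq 32\tau^{3/2}$ and $\PP_x(\cQ_t< 18\tau^{3/2})\leq 1/4$. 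The tight side is the underestimate one, where the Chebyshev gap is only of order $\tau^{3/2}$ while $\EE_x\cI_t$ is itself of order $\tau^{3/2}$, forcing $K$ to be a few hundred; calibrating $K$ here is the main technical obstacle.

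Next, at level $q$ the algorithm performs $R_q$ i.i.d.\ Bernoulli trials whose "wrong" probability is $\leq 1/4$. By Hoeffding's inequality, the chance that the number of successes lands on the wrong side of $R_q/2$ is $\leq \exp(-R_q/8)$, which with $R_q = \lceil 8\log(4/\varepsilon) + 16\log(q+1)\rceil$ is at most $\varepsilon/(4(q+1)^2)$. The probability of stopping with $2^q < m$ is then bounded by $\sum_{q\geq 0}\varepsilon/(4(q+1)^2) \leq \varepsilon\pi^2/24 < \varepsilon/2$; stopping with $2^q>38m$ requires failing at the unique $q$ with $32m\leq 2^q\leq 38m$, which by the same bound has probability $\leq \varepsilon/2$. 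Together these prove~(1).

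For~(2), the cost at round $q$ is $2KR_q t_q = O\!\bigl(R_q \tau^{3/4}\sqrt{2^q}\bigr)$. Set $q^* = \lceil \log_2(38m)\rceil$; trivially $\PP(\text{reach level }q)\leq 1$ for $q\leq q^*$, while for $q>q^*$ iterating the Hoeffding bound gives $\PP(\text{reach level }q)\leq \prod_{q'=q^*}^{q-1}\varepsilon/(4(q'+1)^2)$, which decays super-exponentially and contributes $O(1)$ to the sum. The expected running time is therefore dominated by
\[
\sum_{q=0}^{q^*} R_q\,\sqrt{2^q}\,\tau^{3/4} \ \lesssim\ R_{q^*}\sqrt{2^{q^*}}\,\tau^{3/4} \ =\ O\!\bigl(\sqrt{m}\,\tau^{3/4}\log\log m\bigr),
\]
using $R_{q^*} = O(\log\log m)$ for fixed $\varepsilon$, which is the stated bound.
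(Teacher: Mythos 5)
Your proof follows the same structure as the paper's: single-experiment concentration from Proposition~\ref{prop:intersections} via Chebyshev, amplification over the $R_q$ trials via Hoeffding, and a geometric-series bound on the expected running time. The first-moment and variance bounds you invoke, the union bound giving $\varepsilon/2$ for the underestimate case, and the running-time summation are all as in the paper.

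There is, however, a genuine gap in the overestimate case. You write that stopping at $2^q>38m$ ``requires failing at the unique $q$ with $32m\leq 2^q\leq 38m$''; but the interval $[32m,38m]$ has ratio $38/32<2$ and therefore need not contain any power of $2$. (Take $m=5$: $[160,190]$ contains neither $128$ nor $256$.) Consequently the ``unique $q$'' you condition on may not exist, and your concentration bound $\EE_x\cI_t\geq 32\tau^{3/2}$, which relies on $\hat m\geq 32m$, does not apply at the critical level. The paper handles this cleanly by setting $q^\star=\inf\{q:\,2^q>19m\}$, so that $19m<2^{q^\star}\leq 38m$ holds automatically (since $2^{q^\star-1}\leq 19m$), and by applying Chebyshev in the form
\[
\PP_x\left(\cQ_t<18\tau^{3/2}\right)\ \leq\ \PP_x\left(\cQ_t<\tfrac{18}{19}\EE_x\cQ_t\right)\ \lesssim\ \frac{\var_x\cI_t}{K(\EE_x\cI_t)^2}
\]
for \emph{all} $\hat m>19m$, not just $\hat m\geq 32m$; the ratio $\var_x\cI_t/(\EE_x\cI_t)^2$ is bounded uniformly by an absolute constant in this regime. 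To repair your argument, replace $32m$ by $19m$ throughout (or simply note that any $\hat m>19m$ already gives $\EE_x\cQ_t\geq 19\tau^{3/2}>18\tau^{3/2}$ and then use the relative-deviation Chebyshev bound). The running-time analysis in part~(2) is fine and matches the paper's.
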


\begin{proof}[Proof of Proposition \ref{prop:algo-edges}]
$(1)$  By equation~\eqref{eq:prop1-It-exp} in Proposition~\ref{prop:intersections} and since $d\geq 1$, it always holds that
\begin{equation}
\label{eq:exp-Q}
\frac{\tau^{3/2}\hat{m}}{m} \leq \EE_x \cQ_t  \leq  \frac{\tau^{3/2}\hat{m}}{m}+ 16\tau^{3/2}\, .
\end{equation}
Assume that $q$ is such that $\hat{m}=2^q < m$. Then the expectation of $\cQ_t$ is smaller than $17\tau^{3/2}$. By Chebyshev's Inequality,
$$
\PP_x\left(\cQ_t\geq 18\tau^{3/2}\right) \leq  \PP_x\left(\cQ_t-\EE_x\cQ_t\geq \tau^{3/2}\right) \lesssim  \frac{\var_x \cI_t}{K \tau^3}\, .
$$
Now by equation~\ref{eq:prop1-It-var} and since $t<\tau^{3/4}\sqrt{2m}$, we have $\var_x \cI_t \lesssim \tau^3$. Hence, we may choose $K$ large enough such that $\PP_x\left(\cQ_t\geq 20\tau^{3/2}\right)\leq 1/4$. Using Hoeffding's Inequality, the probability that there are more than $R/2$ successes at this step is smaller than $\exp\left(-R/8\right)=\frac{\varepsilon}{4} (q+1)^{-2}$. Taking a union bound, the probability for the algorithm to stop at a value of $q$ such that $2^q < m$ is smaller than $\varepsilon/2$.

Let now $q$ be such that $\hat{m}=2^q>19m$. By equation~\eqref{eq:exp-Q}, the expectation of $\cQ_t$ is larger than $19\tau^{3/2}$. Hence
$$
\PP_x\left(\cQ_t < 18 \tau^{3/2}\right)\leq  \PP_x\left(\cQ_t <\frac{18}{19}\EE_x\cQ_t \right) \lesssim  \frac{\var_x \cI_t}{K \left(\EE_x \cI_t\right)^2}\, \cdot
$$
Again, equation~\ref{eq:prop1-It-var} entails that the constant $K$ may be chosen such that the above probability is smaller than $1/4$. And by Hoeffding's Inequality, the probability that there are less than $R/2$ successes is smaller than $\exp(-R/8)\leq \varepsilon/4$. Clearly, the probability to stop at a step $q$ with $2^q> 38m$ is smaller than the probability not to have stopped at $q^\star=\inf\{q\geq 0,\; 2^q > 19 m\}$, which is smaller than $\varepsilon/4$.

$(2)$  By the above, for all $q> q^\star$, the probability that the algorithm stops at step $q$ is smaller than $(\varepsilon/4)^{q-q^\star}$. Now the running time up to step $q$ is smaller, up to constant factors, than  $\displaystyle{\sum_{i=0}^q R_i t_i\lesssim R_q t_q}$, so that the expected running time is smaller, up to constant factors, than
\begin{eqnarray*}
R_{q^\star}t_{q^\star}+\sum_{q>q^\star} \left(\frac{\varepsilon}{4}\right)^{q-q^\star}R_q t_q&=&O\left(\sqrt{m}\tau^{3/4}\log\log m\right)\, .
\end{eqnarray*}
\end{proof}

\begin{remark}
If the graph is $d$-regular of if the minimum degree $d$ is known, Proposition~\ref{prop:intersections} allows to design an algorithm which estimates $m$ (or rather $n$ in the case of regular graphs) in expected time $O\left(\sqrt{m/d}\tau^{3/4}\log\log (m/d)\right)$.
\end{remark}

%%%%%%%%%%%%%%%%%%%%%%%%%%%%%%%%%%%%%%%%%%%%%%%%%%%%%%%%%%%%%%%%%%%%%%%%%%%%
%%%%%%%%%%%%%%%%%%%%%%%%%%%%%%%%%%%%%%%%%%%%%%%%%%%%%%%%%%%%%%%%%%%%%%%%%%%%%%%%%%%
\section{Algorithms for the mixing time}\label{sec:algo-mixing}
%%%%%%%%%%%%%%%%%%%%%%%%%%%%%%%%%%%%%%%%%%%%%%%%%%%%%%%%%%%%%%%%%%%%%%%%%%%%%%
%%%%%%%%%%%%%%%%%%%%%%%%%%%%%%%%%%%%%%%%%%%%%%%%%%%%%%%%%%%%%%%%%%%%%%%%%%%%%%%%

The number of intersections may also be used to estimate the mixing time from a given vertex $x\in V$. Assume that the number of edges $m$ in $G=(V,E)$ is known. Let
\begin{eqnarray*}
d_x(t)&=&\sqrt{\sum_y \pi(y)\left(\frac{\PP_x(X_t=y)}{\pi(y)}-1\right)^2}
\end{eqnarray*}
be the $\ell_2(\pi)$-distance between $\PP_x(X_t\in \cdot)/\pi(\cdot)$ and $1$. Our goal now is to estimate
\begin{eqnarray*}
t_x(\delta)&=&\inf\left\{t\geq 0,\; d_x(t)^2 \leq\delta\right\}\, , \quad 0<\delta<1\, .
\end{eqnarray*}
Before describing a self-stopping algorithm to estimate $t_x(\delta)$, we prove the following useful lemma.

\begin{lemma}
\label{lem:var-cov}
Let $X,Y,Z$ be three independent random walks started at $x$ and let $\cL_t^{(X,Y)}=\cI_{2t}^{(X,Y)}-\cI_t^{(X,Y)}$ be the weighted number of intersections of $X$ and $Y$ between $t$ and $2t$. Define $\cL_t^{(X,Z)}$ similarly. For all $t\geq 0$,
\begin{eqnarray}
\label{eq:cL-dist}
\EE_x\cL_t^{(X,Y)}&=& \sum_{i,j=t}^{2t-1} \frac{d_x\left(\frac{i+j}{2}\right)^2+1}{2m}\, ,
\end{eqnarray}
\begin{eqnarray}
\label{eq:var_cL}
\var_x \cL_t^{(X,Y)} &\lesssim & \EE_x \cL_t^{(X,Y)} \max_u \EE_u\cI_t\, ,
\end{eqnarray}
and
\begin{eqnarray}
\label{eq:cov_cL}
\cov_x\left(\cL_t^{(X,Y)},\cL_t^{(X,Z)}\right)&\lesssim & \left(\EE_x \cL_t^{(X,Y)}\right)^{3/2}\sqrt{ \max_u \EE_u\cI_t}\, .
\end{eqnarray}
\end{lemma}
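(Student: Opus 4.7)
The plan is to treat the three estimates in turn by direct moment calculations, paralleling the proof of Proposition~\ref{prop:intersections}. Throughout, let $h(x,u) := \sum_{i=t}^{2t-1}P^i(x,u)$ denote the expected number of visits to $u$ during the interval $[t,2t)$, so that $\EE_x\cL_t^{(X,Y)} = \sum_u h(x,u)^2/\deg(u)$. For \eqref{eq:cL-dist}, I expand this sum and plug in the spectral identity $P^r(x,u)/\pi(u)=\sum_{a=1}^{n}\lambda_a^r\Psi_a(x)\Psi_a(u)$ in an orthonormal eigenbasis $\{\Psi_a\}$ of $P$. The sum over $u$ collapses to $\sum_{a}\lambda_a^{i+j}\Psi_a(x)^2$, which by the same spectral identity equals $1+d_x((i+j)/2)^2$; dividing by $2m$ gives \eqref{eq:cL-dist}.

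For the variance bound \eqref{eq:var_cL}, I mimic the derivation of \eqref{eq:prop1-It-var}. Expanding the square gives
$$\EE_x\bigl[(\cL_t^{(X,Y)})^2\bigr] = \sum_{u,v}\frac{1}{\deg(u)\deg(v)}\biggl(\sum_{i,k=t}^{2t-1}\PP_x(X_i=u,X_k=v)\biggr)^{\!2},$$
and splitting the inner sum according to whether $i\leq k$ or $i>k$, the Markov property produces
$$\sum_{i,k=t}^{2t-1}\PP_x(X_i=u,X_k=v)\leq h(x,u)\,\G_t(u,v)+h(x,v)\,\G_t(v,u).$$
Using $(a+b)^2\leq 2a^2+2b^2$, exploiting symmetry in $(u,v)$, and recognising the factor $\sum_v \G_t(u,v)^2/\deg(v)=\EE_u\cI_t$ from \eqref{eq:intersection-green} then gives $\EE_x[(\cL_t^{(X,Y)})^2]\leq 4(\max_u \EE_u\cI_t)\,\EE_x\cL_t^{(X,Y)}$, from which \eqref{eq:var_cL} follows immediately.

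The covariance bound \eqref{eq:cov_cL} is the step requiring the most care, and the key idea is to condition on $X$. Setting $\phi(u):=h(x,u)/\deg(u)$ and $f(X):=\sum_{i=t}^{2t-1}\phi(X_i)=\EE[\cL_t^{(X,Y)}\mid X]$, independence of $Y$ and $Z$ given $X$ yields
$$\cov_x\bigl(\cL_t^{(X,Y)},\cL_t^{(X,Z)}\bigr) = \var_x\bigl(f(X)\bigr)\leq \EE_x[f(X)^2].$$
Repeating the same $i\leq k$ versus $i>k$ split as above produces
$$\EE_x[f(X)^2]\leq 2\sum_u \frac{h(x,u)^2}{\deg(u)}\sum_v \frac{h(x,v)\,\G_t(u,v)}{\deg(v)}.$$
The final, and genuinely delicate, step is Cauchy--Schwarz applied to the inner sum:
$$\sum_v \frac{h(x,v)\,\G_t(u,v)}{\deg(v)}\leq \sqrt{\sum_v \frac{h(x,v)^2}{\deg(v)}}\,\sqrt{\sum_v \frac{\G_t(u,v)^2}{\deg(v)}}=\sqrt{\EE_x\cL_t^{(X,Y)}}\,\sqrt{\EE_u\cI_t}.$$
Bounding $\sqrt{\EE_u\cI_t}\leq \sqrt{\max_u\EE_u\cI_t}$ and plugging back in yields exactly the $\bigl(\EE_x\cL_t^{(X,Y)}\bigr)^{3/2}\sqrt{\max_u\EE_u\cI_t}$ structure claimed in \eqref{eq:cov_cL}. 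The asymmetric $3/2$ and $1/2$ exponents arise precisely because Cauchy--Schwarz pulls one factor $\sqrt{\EE_x\cL_t^{(X,Y)}}$ outside while leaving another full factor of $\EE_x\cL_t^{(X,Y)}$ in the outer weighting by $h(x,u)^2/\deg(u)$; getting this step right is the main obstacle.
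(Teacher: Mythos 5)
Your proof is correct and takes essentially the same route as the paper's. For~\eqref{eq:cL-dist}, the paper reaches the identity via reversibility and $d_x(s)^2 = \PP_x(X_{2s}=x)/\pi(x)-1$ rather than expanding in the spectral basis, but both are routine and equivalent. For~\eqref{eq:var_cL} you mirror the paper's argument exactly. For~\eqref{eq:cov_cL}, you make explicit a step the paper glosses over: by conditioning on $X$ and using conditional independence of $Y$ and $Z$, the covariance reduces to $\var_x(f(X))$ with $f(X)=\EE[\cL_t^{(X,Y)}\mid X]$, which is exactly why the paper's displayed bound $\EE_x(\cL_t^{(X,Y)}\cL_t^{(X,Z)})\lesssim \sum_{u,v}\G_{t\to 2t}(x,u)^2\G_t(u,v)\G_{t\to 2t}(x,v)/(\deg u\deg v)$ holds; your version of Cauchy--Schwarz (applied to the inner $v$-sum for fixed $u$) and the paper's (applied jointly over $(u,v)$) both yield the same $\left(\EE_x\cL_t^{(X,Y)}\right)^{3/2}\sqrt{\max_u\EE_u\cI_t}$. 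Making the conditioning explicit is a worthwhile clarification but not a different method.
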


\begin{proof}[Proof of Lemma~\ref{lem:var-cov}]
By reversibility, $d_x(t)^2=\frac{\PP_x(X_{2t}=x)}{\pi(x)}-1$, and
$$
\EE_x\cL_t^{(X,Y)} =\frac{1}{\deg(x)}\sum_{i,j=t}^{2t-1} \PP_x(X_{i+j}=x)=\sum_{i,j=t}^{2t-1} \frac{d_x\left(\frac{i+j}{2}\right)^2+1}{2m}\, \cdot
$$
Moving on to~\eqref{eq:var_cL}, defining $\G_{t\rightarrow 2t}(x,u)=\G_{2t}(x,u)-\G_{t}(x,u)$, one easily checks that
$$
\EE_x \cL_t^{(X,Y)}= \sum_u \frac{\G_{t\rightarrow 2t}(x,u)^2}{\deg(u)}\, ,
$$
and that
$$
\EE_x\left(\left(\cL_t^{(X,Y)}\right)^2\right)\lesssim  \sum_{u,v}\frac{\G_{t\rightarrow 2t}(x,u)^2\G_{t}(u,v)^2}{\deg(u)\deg(v)} =  \sum_{u}\frac{\G_{t\rightarrow 2t}(x,u)^2}{\deg(u)}\EE_u\cI_{t} \, ,
$$
which implies
$$
\EE_x\left(\left(\cL_t^{(X,Y)}\right)^2\right)\lesssim  \EE_x \cL_t^{(X,Y)} \max_u \EE_u\cI_t\, .
$$
Finally, to establish~\eqref{eq:cov_cL}, observe that
$$
\EE_x\left(\cL_t^{(X,Y)}\cL_t^{(X,Z)}\right) \lesssim  \sum_{u,v} \frac{\G_{t\rightarrow 2t}(x,u)^2\G_{t}(u,v)\G_{t\rightarrow 2t}(x,v)}{\deg(u)\deg(v)}\, ,
$$
and that, by Cauchy-Schwarz Inequality,
$$
\EE_x\left(\cL_t^{(X,Y)}\cL_t^{(X,Z)}\right)
%\lesssim  \EE_x \cL_t^{(X,Y)} \sqrt{\sum_{u}\frac{\G_{t\rightarrow 2t}(x,u)^2}{\deg(u)}\EE_u\cI_t}
\leq \left( \EE_x \cL_t^{(X,Y)}\right)^{3/2}\sqrt{ \max_u \EE_u\cI_t}\, .
$$
\end{proof}

\begin{algo}
\label{algo:mixing}
For $q=0,1,\dots$, iterate the following procedure until stopped:
\begin{itemize}
\item Let $t=t_q=2^q$ be the current guess for the mixing time $t_x(\delta)$ and let \linebreak $K= K_q = \left\lceil C\delta^{-2} \left\lceil\sqrt{m} t^{-1/4}\right\rceil\right\rceil$, for a constant $C>0$ to be specified later.
\item Let $R=R_q=\lceil 8\log(4/\varepsilon) + 16\log (q+1) \rceil$ and repeat the following experiment $R$ times.
\begin{itemize}
\item Let $X^{(1)},\dots,X^{(K)}$ be $K$ independent \lrw\ started from $x$ and define
\[
\cL_t = \dfrac{1}{{K\choose 2}}\sum_{1\leq \ell<k\leq K} \cL_t^{(\ell,k)}\, ,\quad\mbox{where }\quad
\cL_t^{(\ell,k)} = \sum_{i,j=t}^{2t-1} \frac{1}{\deg(X_i^{(\ell)})}\ind_{\left\{X_i^{(\ell)}=X_j^{(k)}\right\}}\, .
\]
\item If $\cL_t\leq \left(1+\frac{\delta}{2}\right)\frac{t^2}{2m}$, call the experiment a success.
\end{itemize}
\item If the number of successes is larger than $R/2$, then stop and estimate $t_x(\delta)$ by $t=2^q$; otherwise, go from $q$ to $q+1$.
\end{itemize}
\end{algo}

\begin{proposition}
\label{prop:algo-mixing}
Algorithm~\ref{algo:mixing} satisfies the two following properties:
\begin{enumerate}
\item The probability that the algorithm stops at a value of $q$ such that $2^q < t_x(\delta)/2$ or $2^q>2t_x(\delta/4)$ is smaller than $\varepsilon$.
\item The expected running time of the algorithm is $O\left(\delta^{-2}\sqrt{m}t_x(\delta/4)^{3/4}\log\log t_x(\delta/4)\right)$.
\end{enumerate}
\end{proposition}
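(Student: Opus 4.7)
The argument follows the template of Proposition~\ref{prop:algo-edges}: at each doubling step $q$, use Chebyshev's inequality on the U-statistic $\cL_{t_q}$ to show that a single experiment gives the ``correct'' verdict with probability at least $3/4$, amplify via the $R_q$ repetitions and Hoeffding to make the probability of a wrong majority exponentially small in $q$, and then sum over $q$ to establish the correctness statement (1); (2) follows from a geometric-tail computation over iterations.

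For the first moment, I would exploit the monotonicity of $t\mapsto d_x(t)^2$ for lazy reversible chains together with formula~\eqref{eq:cL-dist}. If $t_q<t_x(\delta)/2$ then $(i+j)/2<t_x(\delta)$ for every $i,j\in\{t_q,\dots,2t_q-1\}$, hence $\EE_x\cL_{t_q}^{(1,2)}>(1+\delta)t_q^2/(2m)$, which exceeds the success threshold $(1+\delta/2)t_q^2/(2m)$ by at least $\delta t_q^2/(4m)$. Conversely, if $t_q\geq t_x(\delta/4)$ then $(i+j)/2\geq t_x(\delta/4)$ so $\EE_x\cL_{t_q}^{(1,2)}\leq(1+\delta/4)t_q^2/(2m)$, falling below the threshold by at least $\delta t_q^2/(8m)$.

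For the variance, I would expand $\var_x(\cL_{t_q})$ over the $\binom{K_q}{2}^2$ pairs of pairs of walks: disjoint pairs contribute $0$ by independence, pairs sharing exactly one walk contribute the covariance~\eqref{eq:cov_cL} with multiplicity $\Theta(K_q^3)$, and identical pairs contribute the variance~\eqref{eq:var_cL} with multiplicity $\Theta(K_q^2)$. Using $M_t:=\max_u\EE_u\cI_t\lesssim t^2/m+\trel^{3/2}$ from Proposition~\ref{prop:intersections} and $\mu:=\EE_x\cL_{t_q}^{(1,2)}\lesssim t_q^2/m$ in the range where the algorithm is near stopping, this gives
\[
\var_x(\cL_{t_q})\;\lesssim\;\frac{\mu^{3/2}\sqrt{M_{t_q}}}{K_q}+\frac{\mu\,M_{t_q}}{K_q^2}.
\]
The choice $K_q\asymp\delta^{-2}\sqrt{m}/t_q^{1/4}$ is calibrated so that, after substitution, this is at most $c\,\delta^2(t_q^2/m)^2$ with $c$ as small as desired by taking the constant $C$ large. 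The remaining regime, in which $\mu\gg t_q^2/m$ (so $t_q$ is very small), is easier: the gap to the threshold is then of order $\mu$ itself, so only a weaker variance control is required.

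Chebyshev then yields that each experiment lies on the correct side of the threshold with probability at least $3/4$, so Hoeffding over $R_q$ independent Bernoulli trials bounds the probability of a wrong majority at step $q$ by $e^{-R_q/8}\leq\varepsilon/(4(q+1)^2)$, and summing over $q\geq 0$ proves (1). For (2), let $q^\star:=\lceil\log_2 t_x(\delta/4)\rceil$; beyond this step the algorithm fails to stop with probability at most $(\varepsilon/4)^{q-q^\star}$, and the work $R_qK_qt_q\lesssim\delta^{-2}\sqrt{m}\,t_q^{3/4}(\log q+\log(1/\varepsilon))$ grows geometrically in $q$, so the total expected time is dominated by iteration $q^\star$, giving $O(\delta^{-2}\sqrt{m}\,t_x(\delta/4)^{3/4}\log\log t_x(\delta/4))$. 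The most delicate point is the variance control in the regime $t_q^2/m\ll\trel^{3/2}$: both terms in the variance bound must be controlled simultaneously, and this is precisely what dictates the scaling $K_q\propto\sqrt{m}/t_q^{1/4}$ and hence the $3/4$-exponent in the final running time.
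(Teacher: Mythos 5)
Your proposal correctly reproduces the skeleton of the paper's proof: the doubling search, the two-sided expectation comparison via \eqref{eq:cL-dist} and monotonicity of $d_x(\cdot)^2$, the variance decomposition of the U-statistic $\cL_t$ into a $\Theta(K^{-2})$ variance term and a $\Theta(K^{-1})$ covariance term (Lemma~\ref{lem:var-cov}), Chebyshev followed by Hoeffding amplification, and the geometric-tail computation for the running time. That much is faithful to the paper.

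There is, however, a genuine gap in the variance control, and it is located exactly where you yourself flag the ``most delicate point.'' You invoke the bound $M_t := \max_u\EE_u\cI_t \lesssim t^2/m + \trel^{3/2}$, coming from Proposition~\ref{prop:intersections}. But Algorithm~\ref{algo:mixing} only has access to $m$, not to $\trel$ or to any upper bound $\tau$ on it, and the iteration starts at $t_0 = 1$. In the regime $t_q \ll \trel$, the term $\trel^{3/2}$ dominates and the resulting bound $\kappa = M_t m/(C^2\lceil\sqrt{m}/t^{1/4}\rceil^2 t^2) \lesssim \trel^{3/2}/(C^2 t^{3/2})$ is \emph{not} $O(1/C^2)$; no choice of the absolute constant $C$ rescues this uniformly over $q$. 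Your fallback argument --- that when $\mu := \EE_x\cL_t \gg t^2/m$ the gap to the threshold is $\asymp\mu$ and ``only a weaker variance control is required'' --- does not close this hole: the resulting Chebyshev bound is $\lesssim M_t/(K^2\mu) + \sqrt{M_t/\mu}/K$, and $M_t/\mu$ is in general unbounded (it compares a maximum over all start vertices of early-time returns with $\EE_x\cL_t$, which is built from the later return probabilities $P^{i+j}(x,x)$ with $i+j\geq 2t$). The paper avoids all of this by \emph{not} using Proposition~\ref{prop:intersections} for small $t$. Instead it uses Lemma~\ref{lem:bound-green} directly in \eqref{eq:intersection-return} to get the unconditional bound $\max_u\EE_u\cI_t \lesssim t^{3/2}$ valid for all $t\leq 36m^2/d$ (and for $t > 36m^2/d$ it uses \eqref{eq:bound-trel} to conclude $t\gtrsim\trel^{3/4}\sqrt{m/d}$, hence $M_t\lesssim t^2/m$). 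This dichotomy is precisely what makes $\kappa\lesssim 1/C^2$ hold at \emph{every} iteration without any prior knowledge of $\trel$, and it is the missing ingredient in your argument.
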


\begin{proof}[Proof of Proposition~\ref{prop:algo-mixing}]
$(1)$  Assume that $q$ is such that $t=2^q < t_x(\delta)/2$. By equation~\eqref{eq:cL-dist}, the expectation of $\cL_t$ is larger than $(1+\delta)\frac{t^2}{2m}$. By Chebyshev's Inequality,
\begin{equation}\label{eq:prob-Lt}
\PP_x\left(\cL_t\leq \left(1+\frac{\delta}{2}\right)\frac{t^2}{2m}\right) \lesssim  \frac{\var_x \cL_t}{\delta^2(\EE_x\cL_t )^2}\, \cdot
\end{equation}
Since for all $\ell,\ell',k,k'$ pairwise distinct, $\cov_x\left(\cL_t^{(\ell,k)}, \cL_t^{(\ell',k')}\right)=0$, we have
$$
\var_x \cL_t \lesssim  \frac{\var_x \cL_t^{(X,Y)}}{K^2} + \frac{\cov_x\left(\cL_t^{(X,Y)},\cL_t^{(X,Z)}\right)}{K} \, ,
$$
so that, by Lemma~\ref{lem:var-cov} and using that $\EE_x\cL_t\geq t^2/2m$, we get
$$
 \frac{\var_x \cL_t}{\delta^2(\EE_x\cL_t )^2} \lesssim \kappa +\sqrt{\kappa}\, ,
$$
where
$$
\kappa = \frac{\max_u \EE_u\cI_t}{C^2\left\lceil \sqrt{m}/t^{1/4}\right\rceil^2(t^2/m)}\, .
$$
Now, if $t\leq \frac{36m^2}{d}$, then applying Lemma~\ref{lem:bound-green} directly in~\eqref{eq:intersection-return} yields $\max_u \EE_u\cI_t \lesssim t^{3/2}$. On the other hand, if $t>\frac{36m^2}{d}$, then by~\eqref{eq:bound-trel}, $t\gtrsim\trel^{3/4}\sqrt{m/d}$, which by Proposition~\ref{prop:intersections} yields $\max_u \EE_u\cI_t\lesssim t^2/m$. Hence, in both cases, we have $\kappa \lesssim 1/C^2$, and the constant $C$  can be made large enough so that the right-hand side in~\eqref{eq:prob-Lt} is smaller than $1/4$. Using Hoeffding's Inequality, the probability that there are more than $R/2$ successes is then smaller than $\exp\left(-R/8\right)=\frac{\varepsilon}{4} (q+1)^{-2}$. Taking a union bound, we obtain that the probability for the algorithm to return an estimate smaller than $t_x(\delta)/2$ is smaller than $\varepsilon/2$.

Now let $q$ be such that $t=2^q > t_x(\delta/4)$. Then $\EE_x\cL_t\leq (1+\delta/4)\frac{t^2}{2m}$ and by Chebyshev's Inequality
\begin{eqnarray*}
\PP_x\left(\cL_t > \left(1+\frac{\delta}{2}\right)\frac{t^2}{2m}\right)&\lesssim & \frac{\var_x \cL_t}{\delta^2(t^2/m)^2}\, .
\end{eqnarray*}
By the same arguments as above, the constant $C$ can be chosen large enough so that the above probability is smaller than $1/4$. By Hoeffding's Inequality, the probability that there are less than $R/2$ successes is smaller than $\exp\left(-R/8\right)\frac{\varepsilon}{4}$. Clearly, the probability to stop at a value $q$ such that $2^q> 2t_x(\delta/4)$ is smaller than the probability not to have stopped at $q^\star=\inf\{q\geq 0,\; 2^q > t_x(\delta/4)\}$, which is smaller than $\varepsilon/4$.

$(2)$  By the above, for all $q>q^\star$, the probability that the algorithm stops at step $q$ is smaller than $(\varepsilon/4)^{q-q^\star}$. Moreover, the running time up to step $q$ is smaller, up to constant factors, than $\displaystyle{\sum_{i=0}^q R_i K_i t_i \lesssim \delta^{-2} \sqrt{m}(t_q)^{3/4}\log (q+1)}$. Altogether, the expected running time is less, up to constant factor, than
\begin{eqnarray*}
\frac{\sqrt{m}}{\delta^2}(t_{q^\star})^{3/4}\log (q^\star+1) +\sum_{q>q^\star}(1/4)^{q-q^\star}\frac{\sqrt{m}}{\delta^2}(t_q)^{3/4}\log (q+1)\, ,
\end{eqnarray*}
which is $O\left(\delta^{-2}\sqrt{m}t_x(\delta/4)^{3/4}\log\log t_x(\delta/4)\right)$.
\end{proof}

\begin{remark}
If the graph is $d$-regular of if the minimum degree $d$ is known, Proposition~\ref{prop:intersections} actually allows to design an algorithm which estimates $t_x(\delta)$  in expected time $O\left(\delta^{-2}\sqrt{m/d}t_x(\delta/4)^{3/4}\log\log t_x(\delta/4)\right)$.
\end{remark}

We assume, for simplicity, that the true value of $m$ is known. However, our estimation scheme can easily be extended to the case where only a good approximation of $m$ is available. Combining Proposition~\ref{prop:algo-edges} and ~\ref{prop:algo-mixing} then entails the following corollary.

\begin{corollary}
An upper-bound $\tau$ on the uniform mixing time can be used to precisely estimate both the number of edges and the mixing time from $x$, via a self-stopping algorithm with time complexity $O\left(\sqrt{m}\tau^{3/4}\log\log m\right)$.
\end{corollary}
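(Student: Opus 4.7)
The idea is to compose Algorithm~\ref{algo:edges} and Algorithm~\ref{algo:mixing} sequentially. Since $\tau \geq \tunif \geq \trel$, the given $\tau$ is also a valid upper bound on the relaxation time, so Proposition~\ref{prop:algo-edges} applies: Algorithm~\ref{algo:edges}, fed $\tau$, returns $\hat{m}\in[m,38m]$ with probability $\geq 1-\varepsilon/3$ in expected time $O(\sqrt{m}\tau^{3/4}\log\log m)$.

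One cannot simply plug $\hat m$ into Algorithm~\ref{algo:mixing}: its threshold $(1+\delta/2)t^2/(2m)$ depends sensitively on $m$, and a $38$-factor error could scale it enough to destroy the separation between the regimes $t<t_x(\delta)/2$ and $t>t_x(\delta/4)$. I would therefore insert a short refinement step. With $\hat m$ in hand, pick $t_\star = c\,\tau^{3/4}\sqrt{\hat m}$ with $c$ large enough that $t_\star \gtrsim \trel^{3/4}\sqrt{m/d}$, and compute the estimator~\eqref{eq:hat-m} from $K = O(\eta^{-2})$ independent samples; by Proposition~\ref{prop:intersections} and Chebyshev's inequality, the resulting $\tilde m$ satisfies $|\tilde m/m-1|\leq \eta$ with probability $\geq 1-\varepsilon/3$, at an extra cost of $O(\eta^{-2}\sqrt{m}\tau^{3/4})$ steps. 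Choosing $\eta$ small as a function of $\delta$ (say $\eta=\delta/100$), the gap between $\EE_x\cL_t$ and the modified threshold $(1+\delta/2)t^2/(2\tilde m)$ remains of order $\delta\cdot t^2/(2m)$ in both regimes, so the Chebyshev step in the proof of Proposition~\ref{prop:algo-mixing} goes through verbatim and Algorithm~\ref{algo:mixing} with $\tilde m$ in place of $m$ returns an estimate of $t_x(\delta)$ for a fixed constant $\delta > 0$, with probability $\geq 1-\varepsilon/3$. This refinement is where I expect the main obstacle to lie: one must carefully verify that the Chebyshev-based concentration in Proposition~\ref{prop:algo-mixing} survives the substitution of $\tilde m$ for $m$ in both the too-small-$t$ and too-large-$t$ regimes.

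It remains to bound the running time of the last step, namely $O(\delta^{-2}\sqrt{m}\,t_x(\delta/4)^{3/4}\log\log t_x(\delta/4))$, by the claimed $O(\sqrt{m}\tau^{3/4}\log\log m)$. Since $P$ has non-negative spectrum, submultiplicativity gives $d_x(t+r)^2\leq (1-1/\trel)^{2r}d_x(t)^2\leq e^{-2r/\trel}d_x(t)^2$. The definition of $\tunif$ yields $d_x(\tunif)^2\leq 1/16$, so choosing $r=O(\trel)$ produces $d_x(\tunif+r)^2\leq \delta/4$, i.e., $t_x(\delta/4)\leq \tunif+O(\trel)=O(\tau)$. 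Moreover, $\tunif\lesssim m^3$ for any connected graph (using $\trel\lesssim mn/d\lesssim m^2$ from~\eqref{eq:bound-trel} combined with $\tunif\lesssim \trel\log(1/\min_x\pi(x))$), hence $\log\log t_x(\delta/4)\lesssim\log\log m$. A union bound over the three error events completes the argument.
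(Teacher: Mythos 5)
Your proof is correct and follows essentially the same route the paper intends: combine Propositions~\ref{prop:algo-edges} and~\ref{prop:algo-mixing}, with an intermediate refinement of the edge estimate. You helpfully make explicit what the paper glosses over with ``our estimation scheme can easily be extended to the case where only a good approximation of $m$ is available.'' The raw output of Algorithm~\ref{algo:edges}, $\hat m \in [m,38m]$, is indeed \emph{not} directly usable in Algorithm~\ref{algo:mixing}: since $\EE_x\cL_t\geq t^2/(2m)$ for every $t$, substituting any $\hat m > (1+\delta/2)m$ pushes the success threshold $(1+\delta/2)t^2/(2\hat m)$ permanently below $\EE_x\cL_t$, so in the regime $t>t_x(\delta/4)$ the algorithm would never terminate. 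Your fix (a one-shot run of the estimator~\eqref{eq:hat-m} at $t_\star\asymp\tau^{3/4}\sqrt{\hat m}$) is natural and correct, and your check that $\eta=\delta/100$ preserves the separation between the two regimes of Proposition~\ref{prop:algo-mixing} is exactly the verification that is needed. One small imprecision: to push the \emph{bias} in~\eqref{eq:prop1-It-exp} below $\eta$ you need the constant $c$ in $t_\star=c\,\tau^{3/4}\sqrt{\hat m}$ to scale like $\eta^{-1/2}$, not to be a fixed constant, so the refinement costs $O(\eta^{-5/2}\sqrt{m}\,\tau^{3/4})$ rather than $O(\eta^{-2}\sqrt{m}\,\tau^{3/4})$; since $\eta$ is a fixed constant depending only on $\delta$, this does not change the conclusion. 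Finally, your bounds $t_x(\delta/4)\lesssim\tau$ (via submultiplicativity of $d_x$, using the lazy walk's non-negative spectrum) and $\log\log t_x(\delta/4)\lesssim\log\log m$ (via $\tunif\lesssim\trel\log(1/\min_x\pi(x))\lesssim m^3$) are precisely what is needed to recast Proposition~\ref{prop:algo-mixing}'s running time in the stated form.
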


\noindent{\bf Acknowledgement.} The question of estimating the mixing time with random walks trajectories was posed by G\'abor Lugosi, during the \emph{Eleventh annual workshop in Probability and Combinatorics}, Barbados, April 1-8, 2016. We are grateful to him for bringing this problem to our attention, and we thank the Bellairs Institute where this work was initiated. RO was funded by a {\em Bolsa de Produtividade em Pesquisa} from CNPq, Brazil and a {\em Cientista do Nosso Estado} grant from FAPERJ, Rio de Janeiro, Brazil. His work in this article is part of the activities of FAPESP Center for Neuromathematics (grant \# 2013/ 07699-0 , FAPESP - S.Paulo Research Foundation).

\bibliographystyle{abbrvnat}
\bibliography{biblio}

\end{document}